\setlist[itemize]{topsep=0pt,after=\vspace{1.5\baselineskip}}
\def\R{\mathbb R}  
\newtheorem{theorem}{Theorem}[section]
\newtheorem{lemma}[theorem]{Lemma}
\newtheorem{remark}[theorem]{Remark}
\newcommand{\na}{\nabla}
\newcommand{\Om}{\Omega}
\newcommand{\Ombar}{\overline{\Omega}}
\newcommand{\Cdom}{C_{\partial\Om}}
\newcommand{\f}[2]{\frac{#1}{#2}}
\newcommand{\io}{\int_\Omega}
\newcommand{\intdom}{\int_{\partial\Om}}
\newcommand{\norm}[2][]{\left\|#2\right\|_{#1}}
\newcommand{\set}[1]{\{#1\}}
\newcommand{\kl}[1]{\left(#1\right)}
\newcommand{\calG}{\mathcal{G}}
\newcommand{\Tmax}{T_{max}}
\newcommand{\Lom}[1]{L^{#1}(\Om)}
\newcommand{\nn}{\nonumber}
\title[Properties of solutions to a singular chemotaxis-consumption model] 
      {Global existence and boundedness of solutions to a chemotaxis-consumption model with singular sensitivity}
\author[J. Lankeit and G. Viglialoro]{}
\subjclass[2010]{35Q92, 35A01, 35K55, 35K51, 92C17.}
\keywords{Nonlinear parabolic systems, chemotaxis,  singular sensitivity, global existence, boundedness.\\
\textit{$^*$Corresponding author}: jlankeit@math.uni-paderborn.de}
\definecolor{RED}{rgb}{1,0,0}\definecolor{BLUE}{rgb}{0,0,1} 
\providecommand{\DIFaddbegin}{} 
\providecommand{\DIFaddend}{} 
\providecommand{\DIFdelbegin}{} 
\providecommand{\DIFdelend}{} 
\providecommand{\DIFaddbeginFL}{} 
\providecommand{\DIFaddendFL}{} 
\providecommand{\DIFdelbeginFL}{} 
\providecommand{\DIFdelendFL}{} 
\newcommand{\DIFscaledelfig}{0.5}
\newsavebox{\DIFdelgraphicsbox} 
\newlength{\DIFdelgraphicswidth} 
\newlength{\DIFdelgraphicsheight} 
\LetLtxMacro{\DIFOincludegraphics}{\includegraphics} 
\newcommand{\DIFaddincludegraphics}[2][]{{\color{blue}\fbox{\DIFOincludegraphics[#1]{#2}}}} 
\newcommand{\DIFdelincludegraphics}[2][]{
\sbox{\DIFdelgraphicsbox}{\DIFOincludegraphics[#1]{#2}}
\settoboxwidth{\DIFdelgraphicswidth}{\DIFdelgraphicsbox} 
\settoboxtotalheight{\DIFdelgraphicsheight}{\DIFdelgraphicsbox} 
\scalebox{\DIFscaledelfig}{
\parbox[b]{\DIFdelgraphicswidth}{\usebox{\DIFdelgraphicsbox}\\[-\baselineskip] \rule{\DIFdelgraphicswidth}{0em}}\llap{\resizebox{\DIFdelgraphicswidth}{\DIFdelgraphicsheight}{
\setlength{\unitlength}{\DIFdelgraphicswidth}
\begin{picture}(1,1)
\thicklines\linethickness{2pt} 
{\color[rgb]{1,0,0}\put(0,0){\framebox(1,1){}}}
{\color[rgb]{1,0,0}\put(0,0){\line( 1,1){1}}}
{\color[rgb]{1,0,0}\put(0,1){\line(1,-1){1}}}
\end{picture}
}\hspace*{3pt}}} 
} 
\LetLtxMacro{\DIFOaddbegin}{\DIFaddbegin} 
\LetLtxMacro{\DIFOaddend}{\DIFaddend} 
\LetLtxMacro{\DIFOdelbegin}{\DIFdelbegin} 
\LetLtxMacro{\DIFOdelend}{\DIFdelend} 
\DeclareRobustCommand{\DIFaddbegin}{\DIFOaddbegin \let\includegraphics\DIFaddincludegraphics} 
\DeclareRobustCommand{\DIFaddend}{\DIFOaddend \let\includegraphics\DIFOincludegraphics} 
\DeclareRobustCommand{\DIFdelbegin}{\DIFOdelbegin \let\includegraphics\DIFdelincludegraphics} 
\DeclareRobustCommand{\DIFdelend}{\DIFOaddend \let\includegraphics\DIFOincludegraphics} 
\LetLtxMacro{\DIFOaddbeginFL}{\DIFaddbeginFL} 
\LetLtxMacro{\DIFOaddendFL}{\DIFaddendFL} 
\LetLtxMacro{\DIFOdelbeginFL}{\DIFdelbeginFL} 
\LetLtxMacro{\DIFOdelendFL}{\DIFdelendFL} 
\DeclareRobustCommand{\DIFaddbeginFL}{\DIFOaddbeginFL \let\includegraphics\DIFaddincludegraphics} 
\DeclareRobustCommand{\DIFaddendFL}{\DIFOaddendFL \let\includegraphics\DIFOincludegraphics} 
\DeclareRobustCommand{\DIFdelbeginFL}{\DIFOdelbeginFL \let\includegraphics\DIFdelincludegraphics} 
\DeclareRobustCommand{\DIFdelendFL}{\DIFOaddendFL \let\includegraphics\DIFOincludegraphics} 
\begin{document}
\maketitle
\DIFdelbegin 
\DIFdelend 

\centerline{\scshape Johannes Lankeit$^{1,*}$ \and  Giuseppe Viglialoro$^{2}$}
\medskip
{
 \footnotesize
 \centerline{$^1$Institut für Mathematik}
 \centerline{Universität  Paderborn }
 \centerline{Warburger Str. 100,  33098  Paderborn (Germany)}
 \medskip
 \centerline{$^2$Dipartimento di Matematica e Informatica}
 \centerline{Universit\`{a} di Cagliari}
 \centerline{V. le Merello 92, 09123. Cagliari (Italy)}
 \medskip
}

\bigskip
\begin{abstract}
In this paper we study the zero-flux chemotaxis-system 
\begin{equation*}
\begin{cases}
u_t=\Delta u -\chi \nabla \cdot (\frac{u}{v} \nabla v) \\
v_t=\Delta v-f(u)v 
\end{cases}
\end{equation*}
in a smooth and bounded domain $\Omega$ of $\mathbb{R}^2$,  with $\chi>0$ and $f\in C^1(\mathbb{R})$ essentially behaving like $u^\beta$, $0<\beta<1$. Precisely for $\chi<1$ and any sufficiently regular initial data $u(x,0)\geq 0$ and $v(x,0)>0$ on $\bar{\Omega}$, we show the existence of global classical solutions. Moreover, if additionally $m:=\int_\Omega u(x,0)$ is sufficiently small, then also their boundedness is achieved.
%
%
\end{abstract}
\section{Introduction and motivations}\label{IntroductionSection} 
Chemotaxis systems in the form of the classical Keller--Segel system (\cite{K-S-1970,horstmann,BellomoEtAl}) model aggregation phenomena in situations where cells are attracted by a signal they themselves emit. If they, instead, direct their movement in response to a substance they consume, the equation governing evolution of the signal concentration becomes much more amenable to providing uniform bounds on this concentration (although, in the most commonly used form, the derivation of bounds for its gradient is more negatively affected by a nonlinearity). Such systems have extensively been studied throughout the past few years, especially in the context of chemotaxis--fluid models, and the interested reader can find pointers to the rich literature for example in the introduction of \cite{caolankeit}.  

However, a new difficulty arises if such consumptive chemotaxis models incorporate the effect that small changes in a stimulus affect the response of a biological agent more heavily at a low signal level than the same changes would in presence of high signal concentrations (the so-called 'Weber-Fechner law of stimulus perception') in the way that the chemotactic sensitivity function is chosen singular, as in  
\begin{equation}\label{intro:thesystem}
 \begin{cases}
  u_t=\Delta u - \chi \nabla \cdot \left(\frac uv\nabla v\right)\\
  v_t=\Delta v - uv,
 \end{cases}
\end{equation}
where small signal concentrations enhance the possibly destabilizing cross-diffusive contribution of the chemotaxis term in the first equation. 

This system goes back to Keller and Segel studying the formation of travelling bands of \textit{E. coli} \cite{kellersegel_trav}. (For more results on travelling  wave solutions in this and related models, see \cite{wang_TWSsurvey}.) With respect to global existence of solutions it has been less extensively studied than 
its signal-production relative 
\begin{equation}\label{intro:productionsystem}
 \begin{cases}
  u_t=\Delta u - \chi \nabla \cdot \left(\frac uv\nabla v\right)\\
  v_t=\Delta v - v +u,  
 \end{cases}
\end{equation}
for recent studies of which we refer to \cite{fujiesenba2,lanwin} and the references therein.

Nevertheless, it is known that \eqref{intro:thesystem} admits global solutions if posed in $ℝ^2$ or $ℝ^3$, under a smallness condition on the initial data $(u_0,v_0)$, involving $H^2$-norms of $u_0$ and $\na v_0$, \cite{wangxiangyu_asymptotic}. In bounded, convex two-dimensional domains it is known from \cite{win_ct_sing_abs} that for arbitrarily large initial data, global solutions exist in a generalized sense. Moreover, they become eventually smooth if the initial mass $\io u_0$ is sufficiently small \cite{win_ct_sing_abs_eventual}. (The same article \cite{win_ct_sing_abs_eventual} also identifies a smallness condition on $(u_0,\na v_0)$ in $L\log L (\Om)\times L^2(\Om)$ which leads to global existence of classical solutions.) Similar results were achieved for a fluid-coupled variant of \eqref{intro:thesystem} in \cite{yilongwang} and \cite{black}. In the three-dimensional setting, however, the smallness condition in \cite{wangxiangyu_asymptotic} or, instead, restriction to the setting of radial 
symmetry and renormalized solutions \cite{win_ct_sing_abs_renormalized} seem to be necessary for all known proofs of global existence. 
 
Modifications that ensure global existence of solutions are using nonlinear diffusion of porous medium type (that is, replacing $Δu$ by $Δu^m$), which guarantees global existence (in bounded domains of $ℝ^n$) as long as $m>1+\f n4$, \cite{LankeitLocallyBoundedSingularity}, or weakening the cross-diffusive term by replacing $\f{u}{v} \na v$ by, essentially, an expression of the form $\f{u^{α}}v\na v$ for $α<1-\f{n}4$, \cite{dongmei}.  

Apart from these rather strong changes to the diffusive parts of the system, currently it seems that the two-dimensional case of \eqref{intro:thesystem} is just barely out of reach for global existence assertions concerning classical solutions emanating from rather general initial data, as witnessed by the fact that eventually smooth weak solutions exist (see above) or by recent results on how the presence of logistic source terms ($+κu-μu^2$ in the first equation) affects global solvability (\cite{lankeitlankeit1}): While in higher dimensions, global classical solvability results from $μ$ being sufficiently large, in bounded domains $\Om\subset ℝ^2$, any $μ>0$ suffices, provided that $χ<\sqrt{\frac2n}$, a number that also plays a role for global existence of solutions to \eqref{intro:productionsystem} (see \cite{fujie}). Furthermore, sources with stronger absorption, $+κu-μu^{α}$, $α>1+\f n2$, can ensure global existence, \cite{zhaozheng18}.

It can be expected that also lessening the impact of high values of the first solution component on the evolution of the second should enforce global existence of solutions. But by how much does it have to be lessened? What happens if the signal substance is consumed with a rate sublinearly depending on the bacterial density? 

Indeed, we will show that, at least for $χ<\sqrt{\f2n}=1$, any sublinear dependence of the consumption term on $u$ immediately suffices for globally existent classical solutions. Those will, moreover, remain bounded, if additionally the (initial) mass of bacteria is small. 
\section{Main result and structure of the paper}\label{StructureMainReusltSection}
In agreement with all of the above, this paper is dedicated to the following problem
\begin{equation}\label{problem}
\begin{cases}
u_t=\Delta u -\chi \nabla \cdot \big(\frac{u}{v} \nabla v\big)   & \text{in } \Omega\times(0,\infty), \\
v_t=\Delta v-f(u) v & \text{in } \Omega\times(0,\infty),\\
\frac{\partial u}{\partial \nu}=\frac{\partial v}{\partial \nu}=0 & \text{in } \partial\Omega\times(0,\infty),\\
u(x,0)=u_{0}(x) \quad \textrm{and}\quad v(x,0)=v_0(x),& x\in  \Omega,
\end{cases}
\end{equation}
defined in a bounded and smooth domain $\Omega$ of $\mathbb{R}^2$ and with $0<\chi<1$, 
where $f$ satisfies 
\begin{equation}\label{cond:f}
 f\in C^1(ℝ) \quad\text{and}\quad 0\le f(s)\le s^\beta \quad \text{for all } s>0
\end{equation}
and, occasionally, 
\begin{equation}\label{cond:fprime}
0\le f'(s)\le βs^{β-1} \qquad \text{for all }s> 0
\end{equation}
for some  $0<\beta<1$, and where 
\begin{equation}\label{initdata}
(u_0,v_0)\in C^0(\bar{\Omega})\times W^{1,r}(\Omega) \text{ for some } r>2, \text{ satisfy } u_0\ge 0 \text{ and } v_0>0 \text{ in  }\Ombar
\end{equation}
 are the initial distribution of cells and chemical concentration. Moreover, the zero-flux boundary conditions on both $u$ and $v$ model that the domain is totally insulated.

Under these assumptions we will show that classical solutions exist globally:
\begin{theorem}(Global existence)\label{MainTheorem}   
Let  
\begin{equation}\label{assumptions}\tag{A} 
 \begin{cases}
  \Omega\subset ℝ^2 \text{ be a smooth and bounded domain},\\
  \beta, \chi \in (0,1), \\
  f \text{ satisfy } \eqref{cond:f}.
 \end{cases}
\end{equation}
Then for any given $(u_0,v_0)$ as in \eqref{initdata},  there is a unique pair of functions $(u,v)$, 
\begin{align}\label{solutionregularity}
\begin{cases}
 u\in C^0(\bar{\Omega}\times [0,\infty))\cap C^{2,1}(\bar{\Omega}\times (0,\infty)),\\
 v\in C^0(\bar{\Omega}\times [0,\infty))\cap C^{2,1}(\bar{\Omega}\times (0,\infty))\cap L^\infty_{loc}([0,\infty), W^{1,r}(\Om)), \end{cases}
\end{align}
which solve problem \eqref{problem}.
\end{theorem}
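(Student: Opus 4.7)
My plan is to follow the scheme that is by now fairly standard for chemotaxis systems with singular sensitivity: first establish local existence together with a suitable extensibility criterion, and then produce enough a priori estimates to exclude blow-up on any finite time interval. A Banach fixed-point argument in $C^0(\overline\Omega\times[0,T])\times C^0([0,T];W^{1,r}(\Omega))$, followed by parabolic bootstrap, should yield a unique maximal classical solution $(u,v)$ on $[0,T_{\max})$ with the regularity stated in \eqref{solutionregularity}, together with the extensibility criterion
\[
T_{\max}<\infty \;\Longrightarrow\; \limsup_{t\nearrow T_{\max}} \Bigl( \|u(\cdot,t)\|_{L^\infty(\Omega)} + \|v(\cdot,t)\|_{W^{1,r}(\Omega)} + \|\tfrac{1}{v(\cdot,t)}\|_{L^\infty(\Omega)} \Bigr) = \infty,
\]
with the inverse of $v$ appearing because the sensitivity is singular at $v=0$.

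Next I would collect three cheap a priori bounds: mass conservation $\int_\Omega u(\cdot,t)=m$ (test the first equation with $1$), the pointwise upper bound $v\le\|v_0\|_{L^\infty(\Omega)}$ from the parabolic maximum principle (using $f\ge 0$), and a locally uniform positive lower bound on $v$. For the last, the auxiliary function $w:=-\log v$ satisfies
\[
w_t-\Delta w+|\nabla w|^2=f(u)\le u^\beta,
\]
so dropping the nonnegative gradient term leaves $w_t-\Delta w\le u^\beta$. The sublinearity of $f$ enters crucially here: by Jensen's inequality $\|u^\beta(\cdot,t)\|_{L^{1/\beta}(\Omega)}\le m^\beta$ uniformly in $t\in[0,T_{\max})$, and on the $2$-dimensional $\Omega$ the $L^{1/\beta}\!\to\!L^\infty$ smoothing estimate for the Neumann heat semigroup generates a singularity of order $(t-s)^{-\beta}$, which is integrable precisely because $\beta<1$. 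Applying the variation-of-constants formula and $L^\infty$-comparison then yields $w\in L^\infty(\Omega\times[0,T])$, i.e.\ $v\ge c(T)>0$, for every $T<T_{\max}$.

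The heart of the proof is then a coupled energy estimate for $u$ that survives the singular drift. Testing the $u$-equation by $\log u$ leaves a cross-term $\chi\int\nabla u\cdot\nabla\log v$, which I would handle through a Fujie-type quasi-energy of the form
\[
\mathcal{F}(t):=\int_\Omega u\log u + \kappa\int_\Omega\frac{|\nabla v|^2}{v^2},
\]
with $\kappa=\kappa(\chi)>0$, whose evolution follows from the pointwise identity $(\log v)_t=\Delta\log v+|\nabla\log v|^2-f(u)$ together with suitable integrations by parts. By a Cauchy--Schwarz-type balancing in the Fujie--Winkler style, all cross-terms can be absorbed by the two available dissipations precisely in the subcritical range $\chi<\sqrt{2/n}=1$, while the residual source term, of order $\int u^{1+\beta}$, is controlled through the Gagliardo--Nirenberg inequality in dimension two, using the $L^1$-bound on $u$ and the dissipation $\int|\nabla\sqrt u|^2$---the exponents closing exactly because $\beta<1$. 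This gives $\mathcal{F}\in L^\infty_{\mathrm{loc}}([0,T_{\max}))$. A standard bootstrap---test the $u$-equation with $u^{p-1}$, turn the drift into a manageable term via the lower bound on $v$, and iterate $p\nearrow\infty$ in Moser--Alikakos fashion---then produces an $L^\infty$-bound on $u$ on each $[0,T]\subset[0,T_{\max})$, whereupon parabolic regularity applied to the $v$-equation supplies the missing $W^{1,r}$-bound on $v$. Together these contradict the extensibility criterion and force $T_{\max}=\infty$. The principal obstacle lies in this coupled Fujie-type estimate: both assumptions $\chi<1$ and $\beta<1$ enter in tandem, the former letting the cross-terms be absorbed and the latter rendering the nonlinear source integrable against the available dissipation.
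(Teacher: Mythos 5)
Your scaffolding (local existence with an extensibility criterion, mass conservation, $v\le\|v_0\|_{L^\infty(\Omega)}$, then a priori bounds and a bootstrap) matches the paper's, and your route to the bound on $w=-\log\bigl(v/\|v_0\|_{L^\infty(\Omega)}\bigr)$ is in fact a neat simplification: since $\|u^{\beta}(\cdot,s)\|_{L^{1/\beta}(\Omega)}=m^{\beta}$ by mass conservation alone, the $L^{1/\beta}\!\to\!L^{\infty}$ heat-semigroup estimate with its integrable singularity $(t-s)^{-\beta}$ controls $\|w(\cdot,t)\|_{L^\infty(\Omega)}$ directly, whereas the paper takes $p=2/\beta$ and first needs the space-time $L^2$ bound on $u$.

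The heart of your argument, however, has a genuine gap. The quasi-energy $\mathcal F=\int_\Omega u\log u+\kappa\int_\Omega |\nabla v|^2/v^2=\int_\Omega u\log u+\kappa\int_\Omega|\nabla w|^2$ cannot be closed for arbitrary initial data under only $\chi<1$ and $\beta<1$. Differentiating $\int_\Omega|\nabla w|^2$ along $w_t=\Delta w-|\nabla w|^2+f(u)$ produces, besides $-\int_\Omega(\Delta w)^2$ and a harmless $f$-term, the term $\int_\Omega|\nabla w|^2\Delta w$, which comes purely from the logarithmic structure and is independent of $\chi$, $\beta$ and of the $u\log u$ part; the only available absorption is via $\|\nabla w\|_{L^4(\Omega)}^4\le C_{GN}\|\nabla w\|_{L^2(\Omega)}^2\|\Delta w\|_{L^2(\Omega)}^2$, which works only if $C_{GN}\int_\Omega|\nabla w|^2$ stays below a fixed threshold --- a smallness condition on the data that Theorem \ref{MainTheorem} does not assume. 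This is precisely the obstruction that confines this functional to small-mass or eventual-smoothness results (Winkler's work, and Section \ref{sec:boundedness} of the paper, where smallness of $m$ is imposed). Your appeal to a ``Fujie--Winkler balancing for $\chi<\sqrt{2/n}$'' transplants a mechanism from the signal-production system $v_t=\Delta v-v+u$, whose $v$-equation has a different structure; it does not carry over to the consumption system. The paper's proof avoids $\int_\Omega|\nabla w|^2$ altogether and instead uses $\mathfrak F=\int_\Omega u\log u+a\int_\Omega uw$: in its time derivative the quadratic gradient term appears as $-a(\chi+1)\int_\Omega u|\nabla w|^2$ with a favourable sign, the hypothesis $\chi<1$ enters only through choosing $a$ with $(\chi+2a)^2<4a(\chi+1)$, and $\beta<1$ serves to absorb $a\int_\Omega u^{\beta+1}$ by Gagliardo--Nirenberg and Young using only the mass $m$; the resulting linear-in-time bounds on $\int_\Omega u\log u$, $\int_0^t\int_\Omega|\nabla u|^2/u$ and $\|w\|_{L^\infty(\Omega)}$ then give $T_{\max}=\infty$ through a reduction to a standard extensibility criterion. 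Without supplying smallness of $\|\nabla w_0\|_{L^2(\Omega)}$, your energy step fails, and with it the claimed local-in-time boundedness of $\mathcal F$ and the subsequent Moser iteration.
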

Moreover, if additional smallness assumptions are imposed on the initial bacterial mass (an also biologically meaningful quantity), we can assert boundedness of these solutions.
\begin{theorem}(Boundedness)\label{MainTheoremBoundedness}
Let \eqref{assumptions} and \eqref{cond:fprime} be satisfied. 
Then, it is possible to find a positive $m_*$ with the property that for any given $(u_0,v_0)$ as in \eqref{initdata} 
and such that $\int_\Omega u_0(x)\leq m_*$, there is a unique pair of functions $(u,v)$ as in \eqref{solutionregularity} 
which solve problem \eqref{problem} and are bounded in $\Om\times(0,\infty)$.
\end{theorem}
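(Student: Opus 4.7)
Since Theorem~\ref{MainTheorem} already provides a classical solution on $\Om\times(0,\infty)$, the task reduces to establishing \emph{uniform-in-time} $L^\infty$ control of $u$; a strictly positive lower bound for $v$ then follows from a standard comparison argument applied to $v_t=\Delta v-f(u)v$. The preparatory observations are mass conservation $\io u(\cdot,t)=m$, the maximum principle bound $0<v\le\|v_0\|_{L^\infty(\Om)}=:M$, and the logarithmic substitution $z:=\log(M/v)\ge 0$, which recasts the system as
\begin{equation*}
 u_t=\Delta u+\chi\nabla\cdot(u\nabla z),\qquad z_t=\Delta z-|\nabla z|^2+f(u),
\end{equation*}
both subject to homogeneous Neumann boundary conditions.

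The heart of the plan is a combined entropy inequality for
\[ \mathcal{F}(t):=\io u\log u+\lambda\io|\nabla z|^2, \]
with $\lambda>0$ to be tuned. Differentiating, integrating by parts, and using the $z$-equation produce the Fisher-type dissipation $\io|\nabla u|^2/u$ together with $2\lambda\io(\Delta z)^2$ on the left, against cross terms of the form $\chi\io u\Delta z$, $-2\lambda\io\nabla z\cdot\nabla|\nabla z|^2$, and $2\lambda\io f'(u)\nabla u\cdot\nabla z$ (where hypothesis~\eqref{cond:fprime} enters). The decisive step estimates $\chi\io u\Delta z\le\varepsilon\io(\Delta z)^2+\tfrac{\chi^2}{4\varepsilon}\io u^2$ and then controls the $L^2$-norm via the planar Gagliardo--Nirenberg inequality $\|u\|_{L^2}^2\le C\bigl(\|\nabla\sqrt{u}\|_{L^2}^2\,\|u\|_{L^1}+\|u\|_{L^1}^2\bigr)$, in which the prefactor $m=\|u\|_{L^1}$ appears. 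For $m\le m_*$ with $m_*$ sufficiently small, this prefactor, together with the inequality $\chi<1=\sqrt{2/n}$, allows every cross term to be absorbed into the left-hand side dissipation; the $f'(u)\nabla u\cdot\nabla z$ coupling is tamed similarly using $f'(s)\le\beta s^{\beta-1}$. The outcome is $\mathcal{F}'(t)+\mathcal{F}(t)\le C$ and hence $\sup_{t>0}\mathcal{F}(t)<\infty$.

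With $\io u\log u$ and $\io|\nabla z|^2$ uniformly controlled, a standard $L^p$-testing of the $u$-equation (for some $p>1=\tfrac{n}{2}$) upgrades the estimate to $\sup_t\io u^p<\infty$; the drift $\nabla z$ now lives in $L^\infty_t L^2_x$ and the previous Gagliardo--Nirenberg argument applies uniformly in $t$. A bootstrap --- Moser iteration, or a Duhamel argument based on Neumann-heat-semigroup bounds for $u_t=\Delta u+\chi\nabla\cdot(u\nabla z)$, with the regularity of $\nabla z$ refined through the representation of the $z$-equation and the bound $f(u)\le u^\beta$ with $\beta<1$ --- then yields $\sup_{t>0}\|u(\cdot,t)\|_{L^\infty(\Om)}<\infty$ and completes the proof.

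The principal obstacle is the calibration of the combined energy inequality: both $\chi<1$ and the smallness of $m$ are essential. The first provides geometric room for Young-type absorption in the singular cross-diffusion; the second turns the Gagliardo--Nirenberg $L^1$-prefactor into a quantity small enough to defeat the cross-term constants. Without the mass smallness only the local-in-time bounds that underlie Theorem~\ref{MainTheorem} are recovered; with it, $\mathcal{F}$ stays globally bounded and the bootstrap can proceed.
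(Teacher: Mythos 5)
Your overall architecture (an entropy-type functional coupling $u$ and $\nabla w$, followed by a semigroup/Moser bootstrap to $L^\infty$) is the same family of argument as the paper's, but there is a genuine gap at the decisive step: you claim that smallness of $m$ together with $\chi<1$ lets you absorb \emph{every} cross term into the dissipation. That is true for the term $\chi\int_\Omega u\Delta z$ (via Gagliardo--Nirenberg with the $L^1$-prefactor $m$), but not for the cubic gradient term $-2\lambda\int_\Omega \nabla z\cdot\nabla|\nabla z|^2$ generated by $-|\nabla z|^2$ in the $z$-equation. After Young's inequality this term forces you to control $\int_\Omega|\nabla z|^4$, and the only available bound in the plane with Neumann conditions is \eqref{LadyzhenskayaInequality}, i.e. $\int_\Omega|\nabla z|^4\le C_{GN}\int_\Omega|\nabla z|^2\int_\Omega(\Delta z)^2$. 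Absorption into $2\lambda\int_\Omega(\Delta z)^2$ therefore requires smallness of $\int_\Omega|\nabla z(\cdot,t)|^2$ itself, a quantity which is \emph{not} small at $t=0$: in \eqref{initdata} only the mass is restricted, while $v_0$ (hence $\nabla w_0$) is arbitrary. Consequently the inequality $\mathcal{F}'+\mathcal{F}\le C$ cannot be closed from time zero by mass smallness and $\chi<1$ alone, and your argument breaks precisely where the main difficulty sits. (A smaller issue: in your functional the coupling $\int_\Omega f'(u)\nabla u\cdot\nabla z$ must be estimated by hand, which is delicate for $\beta<\tfrac12$ because Young's inequality produces the negative power $u^{2\beta-1}$.)

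The paper overcomes this with two ingredients your proposal lacks. First, the functional is only \emph{conditionally} monotone: it is $\calG=\tfrac12\int_\Omega|\nabla w|^2+\int_\Omega H(u)$ with $H$ a second primitive of $\sigma\mapsto f'(\sigma)/(\chi\sigma)$, chosen so that the cross term $\int_\Omega f'(u)\nabla u\cdot\nabla w$ cancels exactly, and \eqref{FirstTimeDeriivativeG} yields $\calG'\le 0$ only while $C_{GN}\int_\Omega|\nabla w|^2\le\tfrac12$; Lemma \ref{LemmadefiningValueGTzer0} shows that once below the threshold, the solution stays below it. Second, a waiting-time argument (Lemma \ref{LemmaControllingNablawSquare}): integrating the $w$-equation over space-time and using the global-existence estimates of Lemma \ref{LemmaControllinguLoguAndnablauOveru}, whose constants tend to $0$ as $m\searrow 0$, one finds \emph{some} time $t_*$ at which $\int_\Omega|\nabla w(\cdot,t_*)|^2$ dips below the threshold. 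Only from then on do the absorptions you describe become legitimate (Lemmas \ref{LemmaEventualBoundednessnablaw} and \ref{LemmaFixingValueMofNbawSquare} give eventual bounds on $\int_\Omega|\nabla w|^2$ and $\int_\Omega|\nabla w|^4$), after which Lemma \ref{LemmaFromnablawpower4ToBoundedness} supplies the $L^\infty$ bound on $(t_*+2,\infty)$, boundedness on $[0,t_*+2]$ being free from Theorem \ref{MainTheorem} and continuity. Without the conditional-monotonicity and waiting-time steps, your smallness hypothesis acts on the wrong quantity and the proposed proof does not establish the theorem.
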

The next section, Section \ref{ExistenceSolutionRegularizingSection}, will mainly be concerned with a local-in-time existence result. 
In particular, we show (in Lemma \ref{LemmaToEnsureTisInfty}) that controlling 
\[
 \norm[L^∞(\Omega)]{w(\cdot,t)}, \io u(\cdot,t){\log}u(\cdot,t), \text{ and } \int_0^t \io \f{|\na u |^2}{u}
\]
for $t\in [0,T)$, 
where $w:=-\log\kl{\f{v}{\norm[L^∞(\Omega)]{v_0}}}$ arises from the common transformation (see e.g. \cite{win_ct_sing_abs,LankeitLocallyBoundedSingularity}) which serves to replace $-\f{\nabla v}{v}$ by the nonsingular $\na w$, is sufficient for the conclusion that the solution exists longer than merely up to time $T$. 

In Section \ref{sec:globalexistence}, we then set out to derive bounds on these quantities in order to assert global existence of the previously found local solutions, and hence prove Theorem \ref{MainTheorem}. We will achieve this by consideration of the functional 
\[
 \io u\log u + a \io uw
\]
(whose usefulness in similar arguments pertaining to the different system \eqref{intro:productionsystem} has long been known, see \cite{Lankeit_newapproach,Biler99}, but which appears to be new for consumptive systems). 

The functional on whose properties the proof of eventual boundedness in \cite{win_ct_sing_abs_eventual} relies, is 
\[
 \io (u\log u+|\nabla w|^2); 
\]
we now (i.e. in Section \ref{sec:boundedness}, which is devoted to the proof of Theorem \ref{MainTheoremBoundedness}) treat the similar 
\[
 \calG(t)=\calG(u,w)=\f12\io |\na w|^2 + \io H(u), 
\]
see \eqref{GFunctionalBoundedness}, where $H$ is a second primitive of $σ\mapsto \f{f'(σ)}{χσ}$, in order to derive boundedness of $u$ on $[t_0,\infty)$ for some $t_0>0$, for small-mass solutions. Boundedness on finite time intervals $[0,t_0)$ is no longer an issue thanks to Theorem \ref{MainTheorem}.
\section{Existence of local-in-time solutions and preparatory lemmas}\label{ExistenceSolutionRegularizingSection}
Let us firstly give a result concerning local-in-time existence of classical solutions to system \eqref{problem}. 
\begin{lemma}\label{LocalExistenceLemma}
Assume \eqref{assumptions}. 
Then, for  any given $(u_0,v_0)$ as in \eqref{initdata}, there are $T_{max}\in (0,∞]$ and a uniquely determined pair of functions $(u,v)$ with regularity as in \eqref{solutionregularity} which solve problem \eqref{problem} in $\Om\times(0,T_{max})$ and are such that if $T_{max}<\infty$ then
\begin{equation}\label{extensibility_criterion_Eq} 
\limsup_{t\nearrow T_{max}}(\lVert u (\cdot,t)\rVert_{L^\infty(\Omega)}+\lVert v (\cdot,t)\rVert_{W^{1,r}(\Omega)})=\infty.
\end{equation}
Moreover, we have 
\begin{equation}\label{Bound_of_u} 
\int_\Omega u (\cdot ,t)  =m=\int_\Omega u_0 \quad \textrm{for all}\quad  t\in (0,T_{max}),
\end{equation}
and 
\begin{equation}\label{BoundednessOfv}
u\geq 0 \quad \text{and}\quad  0<v\leq \lVert v_0 \rVert_{L^\infty(\Omega)}\quad \textrm{in}\quad \bar{\Omega} \times (0,T_{max}).
\end{equation}
\begin{proof}
The claim concerning the local existence and uniqueness as well as the extensibility criterion \eqref{extensibility_criterion_Eq} can be shown by straightforward
 adaptations of well-established methods involving an appropriate fixed point framework and standard parabolic regularity theory (see, for instance, 
\cite{win_ct_sing_abs} or \cite{BellomoEtAl}).

On the other hand, taking into consideration the no-flux boundary conditions for problem \eqref{problem},  an integration of its first equation over $\Omega$ provides 
\begin{equation*} 
\frac{d}{dt}\int_\Omega u = 0\quad \textrm{for all}\quad t\in (0,T_{max}),
\end{equation*}
so that $\int_\Omega u=\int_\Omega u_0=m$ and \eqref{Bound_of_u} is shown.

Since $u_0\geq 0$ and $v_0>0$,  comparison arguments apply to yield both expressions in \eqref{BoundednessOfv}.
\end{proof}
\end{lemma}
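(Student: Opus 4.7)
The plan is to remove the singular factor $1/v$ in \eqref{problem} through the Cole--Hopf-type substitution $w:=-\log(v/\|v_0\|_{L^\infty(\Omega)})$ already recalled in Section~\ref{StructureMainReusltSection}. Since $v_0>0$ on $\bar{\Omega}$ by \eqref{initdata}, this is initially well-defined, and (at least as long as $v>0$) the system \eqref{problem} rewrites equivalently as
\begin{align*}
 u_t &= \Delta u + \chi\,\nabla\cdot(u\,\nabla w),\\
 w_t &= \Delta w - |\nabla w|^2 + f(u),
\end{align*}
with homogeneous Neumann boundary conditions and $w|_{t=0}=w_0 = -\log(v_0/\|v_0\|_{L^\infty(\Omega)})\in W^{1,r}(\Omega)$, $w_0\geq 0$. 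Since $v = \|v_0\|_{L^\infty(\Omega)}e^{-w}$, recovering $v$ from $w$ is trivial, and in the $(u,w)$-formulation the system is nonsingular.

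Local existence I would then derive by a Banach fixed-point scheme on a closed ball in $C^0(\bar{\Omega}\times[0,T])\times L^\infty((0,T);W^{1,r}(\Omega))$ centred at $(u_0,w_0)$, for $T>0$ small. Given a candidate $(\tilde u,\tilde w)$, first solve the linear parabolic problem $w_t-\Delta w=-|\nabla\tilde w|^2+f(\tilde u)$ by $L^p$-maximal regularity, using that $f(\tilde u)$ is bounded thanks to \eqref{cond:f} and that $|\nabla\tilde w|^2\in L^\infty((0,T);L^{r/2}(\Omega))$ with $r/2>1$; then, armed with $\nabla w\in L^\infty(L^r)$ for $r>2$, solve the linear drift-diffusion equation $u_t=\Delta u+\chi\nabla\cdot(u\nabla w)$ by standard parabolic semigroup methods, extracting nonnegativity of $u$ from the maximum principle. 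For $T$ sufficiently small this map contracts, giving a unique local solution pair $(u,w)$, hence $(u,v)$. The regularity \eqref{solutionregularity} then follows by bootstrapping with standard parabolic Schauder and $L^p$ estimates.

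The extensibility alternative \eqref{extensibility_criterion_Eq} follows by a now-standard restart argument: if $T_{\max}<\infty$ and $\|u(\cdot,t)\|_{L^\infty(\Omega)}+\|v(\cdot,t)\|_{W^{1,r}(\Omega)}$ were to remain bounded as $t\nearrow T_{\max}$, a uniform lower bound $v\ge\delta>0$ on $\bar{\Omega}\times[0,T_{\max}]$ could be obtained by comparison in the $v$-equation (since $f(u)$ is bounded under the assumed bound on $u$), and the fixed-point scheme could then be restarted at any $t<T_{\max}$ and extended past $T_{\max}$, contradicting maximality. Mass conservation \eqref{Bound_of_u} is immediate from integrating the first equation in \eqref{problem} and invoking the zero-flux conditions on both $\nabla u$ and $(u/v)\nabla v$. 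The sign information \eqref{BoundednessOfv} comes from maximum principles: $u\ge 0$ since $u_0\ge 0$ and the first equation has the divergence-form drift $-\chi(u/v)\nabla v$; $v\le\|v_0\|_{L^\infty(\Omega)}$ by comparison with constants since $f(u)v\ge 0$; and $v>0$ on $\bar{\Omega}\times(0,T_{\max})$ by the strong maximum principle applied to the linear (in $v$) equation $v_t-\Delta v+f(u)v=0$.

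The main obstacle, in my view, is handling the quadratic-in-gradient nonlinearity $-|\nabla w|^2$ in the $w$-equation during the contraction step: the smallness conditions $\chi<1$ and small $m$ play no role here (they enter only in Sections~\ref{sec:globalexistence}--\ref{sec:boundedness}), so everything must be arranged so that this quadratic term is absorbed purely by the smallness of $T$. This forces the careful choice of $W^{1,r}$ with $r>2$ as the space for $\nabla w$, so that $|\nabla w|^2$ lies in a space ($L^{r/2}$ with $r/2>1$) in which $L^p$-maximal regularity can be used.
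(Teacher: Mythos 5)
Your proposal is correct in substance, but it takes a mildly different route from the one the paper points to. The paper's proof is essentially a citation: local existence, uniqueness and the extensibility criterion \eqref{extensibility_criterion_Eq} are obtained by adapting the fixed-point framework of \cite{win_ct_sing_abs} and \cite{BellomoEtAl}, which works \emph{directly with the original variables} $(u,v)$; there the singular factor $1/v$ is harmless on short time intervals because $v_0>0$ and comparison in the $v$-equation keeps $v$ bounded away from zero as long as $u$ stays bounded. You instead perform the transformation $w=-\log(v/\|v_0\|_{L^\infty(\Omega)})$ \emph{before} the contraction argument, trading the singularity for the quadratic term $-|\nabla w|^2$, which you then absorb by smallness of $T$ in the space $C^0(\bar\Omega\times[0,T])\times L^\infty((0,T);W^{1,r}(\Omega))$; this is a legitimate and standard alternative (it is essentially the scheme used in \cite{LankeitLocallyBoundedSingularity}), and you correctly identify the key structural point that makes the criterion \eqref{extensibility_criterion_Eq} possible without a $\|1/v\|_{L^\infty}$ term, namely that a bound on $\|u\|_{L^\infty}$ together with \eqref{cond:f} yields a positive lower bound for $v$ on finite time intervals, so the restart argument closes. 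Two minor technical remarks: the $L^\infty$-in-time bound for $\nabla w$ in $L^r(\Omega)$ is more directly obtained from the $L^p$--$L^q$ smoothing estimates of the Neumann heat semigroup (the exponent $\frac12+\frac1r<1$ being integrable) than from $L^p$ maximal regularity, which a priori gives only $L^p$-in-time information; and uniqueness in the full class \eqref{solutionregularity}, rather than merely within the fixed-point ball, requires an additional (standard) Gronwall argument on differences --- the paper glosses over this as well, so neither point is a genuine gap. Your treatment of mass conservation and of the sign and upper bounds for $u$ and $v$ coincides with the paper's comparison arguments.
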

Once the local existence of solutions to \eqref{problem} is attained, through the transformation 
\begin{equation}\label{transformationVtoW}
w:=-\log \Big(\frac{v}{\lVert v_0 \rVert_{L^\infty(\Omega)}}\Big), \qquad w_0:=-\log \Big(\frac{v_0}{\lVert v_0 \rVert_{L^\infty(\Omega)}}\Big), 
\end{equation}
which has already been used in \cite{LankeitLocallyBoundedSingularity} and \cite{win_ct_sing_abs}, 
we get that $w\geq 0$ in $\Omega \times (0,T_{max})$, and that $(u,w)\in  (C^0(\bar{\Omega}\times [0,T_{max}))\cap C^{2,1}(\bar{\Omega}\times (0,T_{max})))^2$ also (classically) solves the transformed problem
\begin{equation}\label{AuxiliaryMainsSystem}
\begin{cases}
u_t=\Delta u +\chi \nabla \cdot (u \nabla w) &  \text{ in }\Omega \times (0,T_{max}), \\
w_t=\Delta w -\rvert \nabla w \rvert^2+f(u) &  \text{ in }\Omega \times (0,T_{max}), \\
\frac{\partial u}{\partial \nu}=\frac{\partial w}{\partial \nu}=0 & \text{ in }\partial \Omega\times (0,T_{max}),\\
u(x,0)=u_{0}(x)\geq 0 \quad w(x,0)=w_0(x)\geq 0,& x\in  \bar{\Omega}.
\end{cases}
\end{equation}
It can be seen that  in this last system the first equation does not present the singularity at $v=0$ appearing in \eqref{problem}, so that this version will be considered in some places  in this paper.

Let us also recall those special cases of the well-known Gagliardo-Nirenberg inequality which will be used through the paper to prove the main theorems. 

\begin{lemma} (Gagliardo-Nirenberg inequality)\label{InequalityG-NLemma}
Let $\Omega$ be a bounded Lipschitz domain of $\R^2$. Then there is a constant $C_{GN}>0$ such that the following inequalities hold: 
With $\mathfrak{q},\mathfrak{s}\in\set{1,2}$, $\mathfrak{p}\in [2,4]$, $\theta=1-\f{\mathfrak{q}}{\mathfrak{p}}\in[0,1)$,  
\begin{equation}\label{InequalityTipoG-N} 
\| f \|_{L^{\mathfrak{p}}(\Omega)} \leq C_{GN} ( \| \nabla f \|_{L^{2}(\Omega)}^{\theta} \| f \|_{L^{\mathfrak{q}}(\Omega)}^{1 - \theta}+  \| f \|_{L^{\mathfrak{s}}(\Omega)})
\end{equation}
is satisfied for all $f\in L^\mathfrak{q}(\Omega)$ with $\nabla f\in L^2(\Omega)$,  
\begin{equation}\label{InequalityTipoG-N_Version2} 
\| f \|_{L^{\mathfrak{3}}(\Omega)} \leq C_{GN}  \|  f \|_{L^{\mathfrak{1}}(\Omega)}^{\f13} \| f \|_{W^{1,2}(\Omega)}^{\f23} \;\; \text{ for all } f\in W^{1,2}(\Omega).
\end{equation}
Finally, for any $f\in W^{2,2}(\Omega)$ fulfilling $\frac{\partial f}{\partial \nu}=0$ on $\partial  \Omega$ we have that 
\begin{equation}\label{LadyzhenskayaInequality} 
\| \nabla f \|_{L^{4}(\Omega)}^4 \leq C_{GN} \| \nabla f \|_{L^{2}(\Omega)}^{2} \| \Delta f \|_{L^{2}(\Omega)}^{2}.
\end{equation}
\begin{proof}
See \cite{Nirenber_GagNir_Ineque}.
\end{proof}
\end{lemma}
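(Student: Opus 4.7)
The plan is to obtain all three inequalities as corollaries of the classical Gagliardo--Nirenberg interpolation theorem proved in \cite{Nirenber_GagNir_Ineque}, via standard reductions that are essentially decoupled from the rest of the paper. The strategy splits naturally into a whole-space step and a transfer step to the bounded Lipschitz domain $\Omega$.

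First, I would record the two-dimensional homogeneous inequality on $\R^2$: for any admissible triple one has $\|g\|_{L^{\mathfrak{p}}(\R^2)}\le C\|\nabla g\|_{L^2(\R^2)}^{\theta}\|g\|_{L^{\mathfrak{q}}(\R^2)}^{1-\theta}$ for every $g\in C_c^{\infty}(\R^2)$, with $\theta=1-\mathfrak{q}/\mathfrak{p}$. This is immediate from the Sobolev embedding $W^{1,2}(\R^2)\hookrightarrow L^{\mathfrak{p}}(\R^2)$ (which is valid for all finite $\mathfrak{p}\ge 2$) together with the dilation $g_\lambda(x)=g(\lambda x)$ and optimisation in $\lambda>0$. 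To transfer this to $\Omega$ I would use a bounded linear extension operator $E\colon W^{1,2}(\Omega)\to W^{1,2}(\R^2)$ provided by Stein's theorem for Lipschitz domains. Because $E$ satisfies $\|Ef\|_{L^{\mathfrak{p}}(\R^2)}\le C\|f\|_{L^{\mathfrak{p}}(\Omega)}$ and $\|\nabla Ef\|_{L^{2}(\R^2)}\le C\bigl(\|\nabla f\|_{L^{2}(\Omega)}+\|f\|_{L^{\mathfrak{s}}(\Omega)}\bigr)$, applying the whole-space inequality to $Ef$ and inserting both bounds yields \eqref{InequalityTipoG-N}; the additive correction $\|f\|_{L^{\mathfrak{s}}(\Omega)}$ is exactly the price paid for the extension. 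Inequality \eqref{InequalityTipoG-N_Version2} is then the special case $\mathfrak{p}=3$, $\mathfrak{q}=1$, $\theta=2/3$, with Young's inequality used to absorb the additive $\|f\|_{L^{\mathfrak{s}}(\Omega)}$ term into $\|f\|_{W^{1,2}(\Omega)}^{2/3}$.

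For \eqref{LadyzhenskayaInequality} I would apply the planar Ladyzhenskaya inequality $\|g\|_{L^{4}(\R^2)}^{4}\le C\|g\|_{L^{2}(\R^2)}^{2}\|\nabla g\|_{L^{2}(\R^2)}^{2}$ (which is the specific case $\mathfrak{p}=4$, $\mathfrak{q}=2$ of the preceding step) componentwise to the partial derivatives $\partial_i f$, after extending them to $\R^2$. Summing over $i=1,2$ yields \eqref{LadyzhenskayaInequality} with $\|\Delta f\|_{L^{2}(\Omega)}^{2}$ replaced by $\|D^2 f\|_{L^{2}(\Omega)}^{2}+\|\nabla f\|_{L^{2}(\Omega)}^{2}$. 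To reach the form stated in the lemma I would invoke the classical elliptic regularity identity, obtained by integrating $\int_\Omega (\Delta f)^2$ by parts twice and exploiting the Neumann condition $\partial_\nu f=0$ on the smooth boundary $\partial\Omega$, which gives $\|D^2 f\|_{L^{2}(\Omega)}\le C\|\Delta f\|_{L^{2}(\Omega)}$ up to a boundary term controlled by the second fundamental form of $\partial\Omega$ and a trace inequality for $\nabla f$.

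The step that I expect to require the most care is exactly this last $H^2$-bound for Neumann data: on a general (non-convex) smooth domain the boundary contribution arising from the double integration by parts is not a priori of favourable sign, so it has to be absorbed by a small-constant trace/Gagliardo--Nirenberg estimate combined with the smoothness of $\partial\Omega$. Once this elliptic regularity is in place, everything else in the lemma is an algebraic combination of Nirenberg's theorem on $\R^2$, the properties of the Stein extension, and Young's inequality.
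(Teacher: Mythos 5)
The paper itself offers no argument for this lemma beyond the citation ``See \cite{Nirenber_GagNir_Ineque}'', so your attempt to actually derive the three inequalities is necessarily a different route; the first two parts of your plan (scale-invariant inequality on $\R^2$ plus a Stein extension, then the special case $\mathfrak{p}=3$, $\mathfrak{q}=1$) are standard and essentially workable. One inaccuracy there: Stein's operator is bounded from $W^{1,2}(\Omega)$ to $W^{1,2}(\R^2)$, so the lower-order term it produces is $\|f\|_{L^2(\Omega)}$, not $\|f\|_{L^{\mathfrak{s}}(\Omega)}$ with $\mathfrak{s}=1$; to reach \eqref{InequalityTipoG-N} with $\mathfrak{q}=\mathfrak{s}=1$ (the case actually used, e.g., in \eqref{Estim_3_For_u^3}) you still need an interpolation of $L^2$ between $L^1$ and $L^{\mathfrak{p}}$ followed by absorption of the $L^{\mathfrak{p}}$-factor. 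This is fixable but is not covered by the properties of $E$ you state.

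The genuine gap is in the derivation of \eqref{LadyzhenskayaInequality}. Applying the planar Ladyzhenskaya inequality to the components $\partial_i f$ and summing gives, as you say, $\|\nabla f\|_{L^4(\Omega)}^4 \le C\,\|\nabla f\|_{L^2(\Omega)}^2\bigl(\|D^2f\|_{L^2(\Omega)}^2+\|\nabla f\|_{L^2(\Omega)}^2\bigr)$, and even granting the Neumann $H^2$-estimate $\|D^2 f\|_{L^2(\Omega)}\le C\|\Delta f\|_{L^2(\Omega)}$ (which needs the boundary to be smooth or the domain convex, not merely Lipschitz -- harmless here, and the non-convex boundary term is handled in the spirit of Lemma \ref{lem:bdry} and \cite{ishida_seki_yokota}), you are left with an extra summand $C\|\nabla f\|_{L^2(\Omega)}^4$ that does not appear in the pure product form \eqref{LadyzhenskayaInequality}. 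Your outline never removes it. The missing observation is that the Neumann condition also yields a Poincar\'e-type bound: with $\overline{f}$ the mean of $f$, $\|\nabla f\|_{L^2(\Omega)}^2=-\int_\Omega (f-\overline{f})\Delta f\le C_P\|\nabla f\|_{L^2(\Omega)}\|\Delta f\|_{L^2(\Omega)}$, hence $\|\nabla f\|_{L^2(\Omega)}\le C_P\|\Delta f\|_{L^2(\Omega)}$, which lets the leftover term be absorbed into $\|\nabla f\|_{L^2(\Omega)}^2\|\Delta f\|_{L^2(\Omega)}^2$. Without this (or an equivalent argument) the chain of estimates you list does not close to the inequality as stated.
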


In order to avoid convexity conditions on the domain, let us recall the following estimate: 
\begin{lemma}\label{lem:bdry}
 Let $\Om\subset ℝ^n$, $n\geq 1$, be a bounded domain with smooth boundary. Then there is $C_{\partial\Om}>0$ such that for every function $f\in C^2(\Ombar)$ with $\f{\partial f}{\partial \nu}=0$ on $\partial\Om$, the inequality 
\[
 2\int_{\partial\Om} |\nabla f|^2 \f{\partial |\nabla f|^2}{\partial \nu} \le \f1{16} \io |\na|\na f|^2|^2 + C_{\partial\Om} \kl{\io |\na f|^2}^2
\]
 holds. 
\end{lemma}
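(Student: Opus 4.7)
The plan is to combine a pointwise boundary identity coming from the Neumann condition with a trace-interpolation estimate on $\partial\Omega$.

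\textbf{Step 1 (pointwise boundary inequality).} First I would establish that, thanks to $\partial f/\partial\nu=0$, at every point of $\partial\Omega$ one has
\[
\frac{\partial|\nabla f|^2}{\partial\nu}\le 2\kappa_{\partial\Omega}\,|\nabla f|^2,
\]
where $\kappa_{\partial\Omega}>0$ bounds the absolute values of the principal curvatures of $\partial\Omega$. Indeed, in a local orthonormal frame $\{\tau,\nu\}$ at a boundary point $\nabla f$ is tangential (since $\partial_\nu f=0$), while a direct computation gives $\partial_\nu|\nabla f|^2=2(\nabla f)^\top(\nabla^2 f)\nu$; tangentially differentiating the identity $\nabla f\cdot\nu=0$ and using that $\partial_\tau\nu=-\kappa\tau$ for a (signed) boundary curvature $\kappa$ identifies $(\nabla f)^\top(\nabla^2 f)\nu$ as $-\kappa|\nabla f|^2$, after which $|\kappa|\le\kappa_{\partial\Omega}$ closes the estimate. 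In particular,
\[
2\int_{\partial\Omega}|\nabla f|^2\frac{\partial|\nabla f|^2}{\partial\nu}\le 4\kappa_{\partial\Omega}\int_{\partial\Omega}g^2,\qquad g:=|\nabla f|^2.
\]

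\textbf{Step 2 (trace and Gagliardo-Nirenberg).} It then suffices to show $\int_{\partial\Omega}g^2\le\tfrac{1}{64\kappa_{\partial\Omega}}\int_\Omega|\nabla g|^2+C\bigl(\int_\Omega g\bigr)^2$. The standard divergence-based trace estimate, obtained by integrating $\nabla\cdot(g^2\vec N)$ over $\Omega$ for a smooth extension $\vec N$ of $\nu$ with $\vec N\cdot\nu=1$ on $\partial\Omega$, followed by Young's inequality, yields for every $\delta>0$
\[
\int_{\partial\Omega}g^2\le \delta\int_\Omega|\nabla g|^2+C_\delta\int_\Omega g^2.
\]
The remaining interior $L^2$-norm is then controlled via \eqref{InequalityTipoG-N} with $\mathfrak p=2$, $\mathfrak q=\mathfrak s=1$ (so that $\theta=\tfrac12$), giving
\[
\|g\|_{L^2}^2\le 2C_{GN}^2\bigl(\|g\|_{L^1}\|\nabla g\|_{L^2}+\|g\|_{L^1}^2\bigr),
\]
and one further Young's inequality on the mixed term produces, for any $\delta'>0$, an inequality of the form $\int_{\partial\Omega}g^2\le\delta'\int_\Omega|\nabla g|^2+C(\delta')\bigl(\int_\Omega g\bigr)^2$. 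Choosing $\delta'=1/(64\kappa_{\partial\Omega})$ then delivers the lemma with $C_{\partial\Omega}:=4\kappa_{\partial\Omega}C(\delta')$.

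\textbf{Main obstacle.} None of the individual ingredients is especially deep; the work lies in tracking constants through the chain (trace bound) $\to$ (Gagliardo-Nirenberg) $\to$ (Young) so that the factor in front of $\int_\Omega|\nabla|\nabla f|^2|^2$ is really no larger than $1/16$. The two-dimensional structure of the intended application enters essentially via the Gagliardo-Nirenberg exponent $\theta=1/2$; for $n\ge3$ the same scheme still works in principle but requires an iterated interpolation to reduce to the pure $L^1$-norm appearing on the right-hand side.
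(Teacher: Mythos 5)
Your argument is correct, but it takes a genuinely different (and more self-contained) route than the paper, which does not prove the lemma at all: it simply cites \cite[Prop. 3.2]{ishida_seki_yokota}, whose proof combines the same pointwise control of $\f{\partial |\na f|^2}{\partial\nu}$ on $\partial\Om$ with the fractional trace embedding $W^{r+\f12,2}(\Om)\hookrightarrow L^2(\partial\Om)$, $r\in(0,\f12)$, and a Gagliardo--Nirenberg inequality in fractional Sobolev spaces. Your Step 1 is exactly the geometric ingredient shared with that reference (the estimate $\f{\partial|\na f|^2}{\partial\nu}\le 2\kappa_{\partial\Om}|\na f|^2$, which also explains the paper's remark that the left-hand side is nonpositive for convex domains), while your Step 2 replaces the fractional-space machinery by the elementary divergence-theorem trace inequality $\int_{\partial\Om}g^2\le\delta\io|\na g|^2+C_\delta\io g^2$ for $g=|\na f|^2$ together with the integer-order inequality \eqref{InequalityTipoG-N}; since $\delta$ (hence $\delta'$) is free, the prefactor $\f1{16}$ costs nothing, and $\|g\|_{L^1(\Om)}=\io|\na f|^2$, so the constants close as you indicate. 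What your route buys is independence from fractional Sobolev spaces and full reliance on tools already stated in the paper; what the cited route buys is a formulation that extends directly to the higher powers $|\na w|^{2q}$ needed in quasilinear settings. Two small points: your frame computation with $\{\tau,\nu\}$ is two-dimensional, whereas the lemma is stated for all $n\ge1$ --- the identity persists in the form $\f{\partial|\na f|^2}{\partial\nu}=-2\,II(\na_T f,\na_T f)$ with $II$ the second fundamental form, so nothing is lost, but it should be phrased that way; and for $n\ge3$ no iterated interpolation is actually needed, since the one-step estimate $\|g\|_{L^2(\Om)}\le C(\|\na g\|_{L^2(\Om)}^{\theta}\|g\|_{L^1(\Om)}^{1-\theta}+\|g\|_{L^1(\Om)})$ with $\theta=\f{n}{n+2}<1$ still yields, after Young's inequality, $\int_{\partial\Om}g^2\le\delta'\io|\na g|^2+C(\delta')\kl{\io g}^2$.
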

\begin{proof}
A proof can be found in \cite[Prop. 3.2]{ishida_seki_yokota}. It is based on embeddings of the form $W^{r+\f12,2}(\Om)\hookrightarrow L^2(\partial\Om)$ for $r\in (0,\f12)$ and a Gagliardo--Nirenberg inequality for fractional Sobolev spaces, combined with estimates of $\f{\partial |\na w|^2}{\partial \nu}$ on $\partial \Om$. For convex domains, the left side actually is nonpositive.  
\end{proof}
The following result will enable us to estimate the spatio-temporal $L^2$-norm of the cells' density by their initial mass. 
\begin{lemma}\label{lem:estimate:L2:spacetime}
 Let $\Om\subset ℝ^2$ be a smooth and bounded domain, let $T>0$, $c_1,c_2>0$ and $m>0$. Then every function $u\in C^0(\Ombar\times[0,T))\cap C^{2,1}(\Ombar\times(0,T))$ which fulfils 
\[
 \io u(\cdot,t)=m \quad \text{and } \quad \int_0^t\io \f{|\na u|^2}{u } \le c_1t+c_2 \qquad \text{for all } t\in(0,T)
\]
for all $t\in(0,T)$ also satisfies 
\begin{equation}\label{GagliardoNirenberg_u^2_on(0,T)}
 \int_0^t \io u^2 \le C_1(m,c_1)t + C_2(m,c_2), 
\end{equation}
where 
\[
 C_1(m,c_1):=m(2C_{GN})^4(c_1+m),\quad C_2(m,c_2):=(2C_{GN})^4mc_2.
\]
\end{lemma}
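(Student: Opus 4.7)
The plan is to write $\io u^2 = \io \varphi^4 = \|\varphi\|_{L^4(\Om)}^4$ for $\varphi:=\sqrt{u}$, and to control the right-hand side by means of the Gagliardo--Nirenberg inequality \eqref{InequalityTipoG-N} and the hypothesis on $\int_0^t\io |\na u|^2/u$.

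Concretely, I first record the elementary identities
\[
 \|\varphi(\cdot,\tau)\|_{L^2(\Om)}^2=\io u(\cdot,\tau)=m, \qquad \|\na \varphi(\cdot,\tau)\|_{L^2(\Om)}^2 = \f14 \io \f{|\na u(\cdot,\tau)|^2}{u(\cdot,\tau)}
\]
for every $\tau\in(0,T)$ (on the positivity set of $u$; points where $u$ vanishes can be handled by standard approximation since $C^{2,1}$-regularity of $u$ is assumed). Next I apply \eqref{InequalityTipoG-N} with $\mathfrak{p}=4$, $\mathfrak{q}=\mathfrak{s}=2$ (so that $\theta=1-\f{2}{4}=\f12\in[0,1)$) to $\varphi(\cdot,\tau)$, obtaining
\[
 \|\varphi(\cdot,\tau)\|_{L^4(\Om)}\le C_{GN}\kl{\|\na \varphi(\cdot,\tau)\|_{L^2(\Om)}^{\f12}\|\varphi(\cdot,\tau)\|_{L^2(\Om)}^{\f12}+\|\varphi(\cdot,\tau)\|_{L^2(\Om)}}.
\]

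Raising to the fourth power and invoking the elementary inequality $(a+b)^4\le 2^4(a^4+b^4)$, I then get
\[
 \io u^2(\cdot,\tau) \le (2C_{GN})^4 \kl{\|\na \varphi(\cdot,\tau)\|_{L^2(\Om)}^{2}\,m+ m^2} = (2C_{GN})^4\kl{\f{m}{4}\io\f{|\na u(\cdot,\tau)|^2}{u(\cdot,\tau)}+m^2}.
\]
Integrating this estimate from $0$ to $t$ and substituting the assumed bound on the dissipation integral yields
\[
 \int_0^t\io u^2 \le (2C_{GN})^4 \kl{\f{m}{4}(c_1t+c_2)+m^2 t} \le (2C_{GN})^4\, m\,(c_1+m)\,t + (2C_{GN})^4\,m\,c_2,
\]
which is \eqref{GagliardoNirenberg_u^2_on(0,T)} with the constants $C_1(m,c_1)$ and $C_2(m,c_2)$ as stated.

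There is no real obstacle here: the only technical care is the reduction to $\varphi=\sqrt u$ on $\{u>0\}$ (handled by considering $\sqrt{u+\varepsilon}$ and passing $\varepsilon\downarrow 0$ if one prefers to be scrupulous), and then matching exponents in \eqref{InequalityTipoG-N}. The strength of the result lies precisely in the fact that it converts control of an $L^1$--$H^1$-type functional of $\sqrt u$ into a spatio-temporal $L^2$-estimate of $u$, which is the shape needed in the subsequent arguments of Section \ref{sec:globalexistence}.
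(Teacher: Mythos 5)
Your argument is correct and follows essentially the same route as the paper: apply the Gagliardo--Nirenberg inequality \eqref{InequalityTipoG-N} with $\mathfrak{p}=4$, $\mathfrak{q}=\mathfrak{s}=2$, $\theta=\tfrac12$ to $\sqrt{u}$, raise to the fourth power via \eqref{AlgebraicInequality2toalpha}, and integrate in time using the hypothesis on $\int_0^t\io |\na u|^2/u$. The only (harmless) difference is that you retain the factor $\tfrac14$ from $|\na\sqrt u|^2=\tfrac{|\na u|^2}{4u}$ and comment on the set $\{u=0\}$, while the paper simply estimates $\|\na\sqrt u\|_{L^2(\Om)}^2\le\io\f{|\na u|^2}{u}$; both yield the stated constants.
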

\begin{proof}
The Gagliardo-Nirenberg inequality \eqref{InequalityTipoG-N} with 
  $\mathfrak{p}=4$, $\mathfrak{q}=\mathfrak{s}=2$ and $\theta=\frac{1}{2}$, together with 
\begin{equation}\label{AlgebraicInequality2toalpha}  
(A+B)^k\leq 2^k(A^k+B^k),
\end{equation}
valid for any $A,B\geq 0$ and $k>0$, enables us to estimate
 \begin{equation*} 
 \begin{split}
 \int_\Omega u^{2}&=\lVert \sqrt{u}\rVert^{4}_{L^{4}(\Omega)}\\ & \leq 
 [C_{GN}(\lVert \nabla \sqrt{u}\rVert^{\theta}_{L^{2}(\Omega)}\lVert \sqrt{u}\rVert^{(1-\theta)}_{L^{2}(\Omega)}+\lVert \sqrt{u}\rVert_{L^{2}(\Omega)})]^{4} \\ &
 \leq (2C_{GN})^{4}\Big[m\Big(\int_\Omega \frac{\lvert \nabla u \rvert^2}{u}\Big)+m^{2}\Big] \qquad \text{ on } (0,T),
 \end{split}
 \end{equation*} 
 and so for $t\in(0,T)$, thanks to the assumption on $\int_0^t\io \f{|\na u|^2}u$, we have that \eqref{GagliardoNirenberg_u^2_on(0,T)} holds.
\end{proof}
As a first application of Lemma \ref{lem:estimate:L2:spacetime}, let us sharpen the extensibility criterion \eqref{extensibility_criterion_Eq}. 
\begin{lemma}\label{LemmaToEnsureTisInfty} 
Let $\Omega$ be a smooth and bounded domain of $\mathbb{R}^2$ and $\chi\ge 0$, $\beta\in(0,1]$ 
and let $f$ be as in \eqref{cond:f}.
For any given $(u_0,v_0)$ as in \eqref{initdata}, let $(u,v)$ be the local-in-time classical solution of problem \eqref{problem} provided by Lemma \ref{LocalExistenceLemma}, and $(u,w)$ that of the transformed problem \eqref{AuxiliaryMainsSystem}, $w$ being the function introduced in \eqref{transformationVtoW}. If there exists a positive constant $C$ such that for all $t\in(0,T_{max})$ 
\begin{equation}\label{Bounds_of_SqrtU_And_w} 
\begin{cases}
\int_0^t\int_\Omega\f{|\na u|^2}{u } \leq C(1+t),\\
 \int_\Omega u(\cdot, t)\log u(\cdot,t)\leq C(1+t), \\
 \lVert w (\cdot,t)\rVert_{L^\infty(\Omega)}\leq C(1+t),
\end{cases}
\end{equation}
then $T_{max}=\infty.$
\begin{proof}
We will, to the contrary, assume $\Tmax$ finite,  and derive a contradiction to \eqref{extensibility_criterion_Eq}.
Let $m=\int_\Omega u=\int_\Omega u_0$, as in Lemma  \ref{LocalExistenceLemma}. Recalling \eqref{transformationVtoW}, we have that 
\begin{equation*}
\frac{1}{v}=\frac{e^w}{\lVert v_0\rVert_{L^\infty(\Omega)}},
\end{equation*}
and the third assumption in \eqref{Bounds_of_SqrtU_And_w} warrants that with some $L>0$
\begin{equation}\label{BoundFor1OverV}
 \frac1v\le L \qquad \text{in } \Om\times(0,\Tmax).
\end{equation}
 Now, from the estimate in Lemma \ref{lem:estimate:L2:spacetime}, we can deduce also some bound of $\int_\Omega \lvert \nabla v \rvert^2$ on $(0,\Tmax)$. In fact, a differentiation in time and the Young inequality allow us, through the second equation of \eqref{problem} and by estimate \eqref{cond:f}, to get 
 \begin{equation*}
 \begin{split}
 \frac{d}{dt}\int_\Omega \lvert \nabla v\rvert^2 &=2\int_\Omega \nabla v \cdot \nabla v_t=2 \int_\Omega \nabla v \cdot (\nabla \Delta v -\nabla (f(u) v)) 
\\ &= -2\int_\Omega (\Delta v)^2+2\int_\Omega  (f(u) v)\Delta v\\ &
 \leq -2\int_\Omega (\Delta v)^2+\int_\Omega  (\Delta v)^2+\int_\Omega  u^{2\beta}v^2 \qquad \text{in } (0,\Tmax).
 \end{split}
 \end{equation*}
 Moreover, again from $β<1$, the Young inequality and \eqref{BoundednessOfv} we infer 
 \begin{equation*}
 \int_\Omega  u^{2\beta}v^2 \leq \lVert v_0 \rVert_{L^\infty(\Omega)}^2\beta \int_\Omega u^2 + \lVert v_0 \rVert_{L^\infty(\Omega)}^2(1-\beta)\lvert \Omega\rvert \qquad \text{in } (0,\Tmax),
 \end{equation*}
 so that, neglecting the nonpositive term  $-\int_\Omega (\Delta v)^2$ we obtain
 \begin{equation*}
 \frac{d}{dt}\int_\Omega \lvert \nabla v\rvert^2 \leq \lVert v_0 \rVert_{L^\infty(\Omega)}^2\beta \int_\Omega u^2 + \lVert v_0 \rVert_{L^\infty(\Omega)}^2(1-\beta)\lvert \Omega\rvert \qquad \text{in } (0,\Tmax).
 \end{equation*}
 Finally, by means of  \eqref{GagliardoNirenberg_u^2_on(0,T)}, an integration over $(0,t)$ yields, for 
 \[
 C_2=\lVert v_0 \rVert_{L^\infty(\Omega)}^2(\beta C_1 +(1-\beta)\lvert \Omega\rvert),
 \]
that
 \begin{equation}\label{BoundSquareNablav(0,T)}
 \int_\Omega \lvert \nabla v(\cdot,t)\rvert^2 \leq C_2(1+t)\quad \textrm{for all}\quad t\in (0,T_{max}).
 \end{equation}
Having derived these bounds, we will next attempt to reduce the present problem to the setting of the standard extensibility criterion of \cite[Sec. 3]{BellomoEtAl}.
 According to some ideas used in \cite{Lankeit_newapproach}, for the positive constant $L$ above introduced, let $\xi_L:\R \rightarrow [0,1]$ be a smooth, decreasing function verifying $\xi_L(v)=1$ for $v\leq 1/(2L)$ and $\xi_L(v)=0$ for $v\geq 1/L$. Subsequently, the function
 \begin{equation*}
 S(x,t,u,v)=
 \xi_L(v)\frac{2\chi}{L}+(1-\xi_L(v))\frac{\chi}{v}, \; \;\;\; (x,t,u,v) \in \bar{\Omega}\times [0,\infty)\times \R^2,
 \end{equation*}
 belongs to $C^{1+\omega}_{\textrm{loc}}(\bar{\Omega}\times [0,\infty)\times \mathbb{R}^2)$, for some $\omega \in (0,1)$, and, additionally, satisfies $S(x,t,u,v)\equiv \frac{\chi}{v}$ for all $v\geq 1/L.$

From all of the above and following the nomenclature of \cite[Sec. 3]{BellomoEtAl}, setting
\begin{align*}
f(x,t,u,v_+)&\equiv 0,\quad \textrm{and}\quad g(x,t,u_+,v)=v-f(u) v,
\end{align*}
 the two partial differential equations of problem \eqref{problem} read 
 \[
 \begin{cases}
  u_t=\Delta u-\nabla \cdot (uS(x,t,u,v)\nabla v)+f(x,t,u,v)& \text{in }\Omega \times (0,T_{max})\\
  v_t=\Delta v-v+g(x,t,u,v) & \text{in }\Omega \times (0,T_{max}),
 \end{cases}
\]
by virtue of bound \eqref{BoundFor1OverV}.  
In particular,  besides   $S\in C^{1+\omega}_{\textrm{loc}}(\bar{\Omega}\times [0,\infty)\times \mathbb{R}^2)$, we have also that $f\in\ C^{1-}_{\textrm{loc}}(\bar{\Omega}\times [0,\infty)\times \mathbb{R}^2)$ and $g\in C^{1-}_{\textrm{loc}}(\bar{\Omega}\times [0,\infty)\times \mathbb{R}^2)$, as well as $f(x,t,0,v)=0$ for all $(x,t,v)\in  \bar{\Omega}\times [0,\infty)\times \mathbb{R}$ and $g(x,t,u,0 )=0$ for all $(x,t,u)\in \bar{\Omega}\times [0,\infty)\times \mathbb{R}$. 

After these preparations, we can conclude that $T_{max}=\infty$. Indeed, in view of the assumptions in \eqref{Bounds_of_SqrtU_And_w}, estimate \eqref{BoundSquareNablav(0,T)}, the regularity and boundedness of both $S$ and $v$ and the expression of $g$ given above, there exists a positive $N$ such that for all $t\in(0,T_{max})$ 
\begin{equation*} 
\begin{cases}
\int_\Omega \lvert \nabla v(\cdot,t)\rvert^2\leq N,\quad \int_\Omega u(\cdot, t)\log u(\cdot,t)\leq N, \\ 
S(x,t,u,v)\leq N,\quad\lvert  g(x,t,u,v)\rvert \leq N(1+u).
\end{cases}
\end{equation*}
In such conditions, all the hypotheses  of \cite[Lemma 3.3]{BellomoEtAl} are accomplished and hence the same lemma implies boundedness of $t\mapsto\lVert u(\cdot, t)\rVert_{L^\infty(\Omega)}+\lVert v(\cdot, t)\rVert_{W^{1,r}(\Omega)}$  on $(0,T_{max})$. This contradicts the extensibility criterion \eqref{extensibility_criterion_Eq}, and therefore $T_{max}=\infty.$
\end{proof}
\end{lemma}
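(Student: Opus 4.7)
The plan is to argue by contradiction: assume $T_{max}<\infty$ and derive a contradiction with the extensibility criterion \eqref{extensibility_criterion_Eq}. First, observe that by the transformation \eqref{transformationVtoW} one has $\frac{1}{v} = \frac{e^w}{\lVert v_0\rVert_{L^\infty(\Omega)}}$, so the third assumption in \eqref{Bounds_of_SqrtU_And_w} immediately provides a constant $L>0$ with $\frac{1}{v}\le L$ throughout $\Omega\times(0,T_{max})$. This removes the singularity of the sensitivity on the finite time window of interest.

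Next, I would combine the first assumption in \eqref{Bounds_of_SqrtU_And_w} with Lemma \ref{lem:estimate:L2:spacetime} to obtain a spatio-temporal $L^2$-bound $\int_0^t\int_\Omega u^2\le C(1+t)$ on $(0,T_{max})$. To promote this to an $H^1$-bound for $v$, I would test the $v$-equation from \eqref{problem} by $-\Delta v$ (equivalently, differentiate $\int_\Omega|\nabla v|^2$), use \eqref{cond:f} in the form $f(u)\le u^\beta$, the $L^\infty$-bound $v\le\lVert v_0\rVert_{L^\infty(\Omega)}$ from \eqref{BoundednessOfv}, and Young's inequality to estimate $u^{2\beta}\le\beta u^2+(1-\beta)$. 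Absorbing the $\int(\Delta v)^2$ term and integrating in time then yields $\int_\Omega|\nabla v(\cdot,t)|^2\le C(1+t)$ for $t\in(0,T_{max})$.

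With these preparations, I would reduce to the framework of \cite[Sec.~3]{BellomoEtAl}. Because the lower bound $v\ge 1/L$ may fail once $t$ is allowed to vary freely in $[0,\infty)$, I would replace the chemotactic sensitivity $\chi/v$ by a smooth globally bounded function $S(x,t,u,v)$ that coincides with $\chi/v$ for $v\ge 1/L$; on $(0,T_{max})$ this modification is cosmetic. In the resulting formulation the required hypotheses hold on $(0,T_{max})$: $S$ is uniformly bounded, $\int_\Omega |\nabla v|^2$ is controlled (by the previous step), $\int_\Omega u\log u$ is controlled by the second assumption in \eqref{Bounds_of_SqrtU_And_w}, and the reaction term $g(x,t,u,v) = v-f(u)v$ grows at most linearly in $u$ since $f(u)\le u^\beta\le 1+u$. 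Invoking \cite[Lemma 3.3]{BellomoEtAl} then yields boundedness of $t\mapsto \lVert u(\cdot,t)\rVert_{L^\infty(\Omega)}+\lVert v(\cdot,t)\rVert_{W^{1,r}(\Omega)}$ on $(0,T_{max})$, contradicting \eqref{extensibility_criterion_Eq}.

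The main obstacle is matching the singular system to a standard parabolic bootstrap. Once the $L^\infty$-bound on $w$ is translated into a pointwise lower bound on $v$, the truncation device turns the problem into a routine one, and the only genuinely new estimate needed is the $L^2$-in-space-time control of $u$, which is precisely what Lemma \ref{lem:estimate:L2:spacetime} was set up to deliver from the first assumption in \eqref{Bounds_of_SqrtU_And_w}.
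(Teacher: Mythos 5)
Your proposal is correct and follows essentially the same route as the paper: the pointwise bound $1/v\le L$ from the $w$-bound, the spatio-temporal $L^2$-estimate of $u$ via Lemma \ref{lem:estimate:L2:spacetime}, testing the $v$-equation with $-\Delta v$ to control $\int_\Omega|\nabla v|^2$, truncating the sensitivity so that it agrees with $\chi/v$ where $v\ge 1/L$, and concluding with \cite[Lemma 3.3]{BellomoEtAl} to contradict \eqref{extensibility_criterion_Eq}. No gaps to report.
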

\section{Existence of global solutions: proof of Theorem \ref{MainTheorem}}\label{sec:globalexistence}
In this section, starting from the local solution $(u,w)$ to problem \eqref{AuxiliaryMainsSystem}, we define, for some proper $a>0$, whose precise value is to be chosen during the next proof, the following energy functional 
\begin{equation}\label{FuntionalUsedForGlobalExistence}
\mathfrak{F}=\mathfrak{F}(t)=\mathfrak{F}(u,w):=\int_\Omega u \log u+a\int_\Omega u w\quad \textrm{on}\quad (0,T_{max}),
\end{equation}
 and its initial value  
\[\mathfrak{F}(0)=\int_\Omega u _0\log u_0+a\int_\Omega u_0 w_0.\]
An investigation  of its time depending behaviour  will reveal useful estimates to be employed in the proof of Theorem \ref{MainTheorem}. 
\begin{lemma}\label{LemmaControllinguLoguAndnablauOveru}
Assume \eqref{assumptions}. 
Let $m>0$. Then there is $L_1=L_1(m)>0$ such that the following holds: For any given $(u_0,v_0)$ as in \eqref{initdata}, 
and additionally satisfying $\io u_0=m$, let $(u,v)$ be the local-in-time classical solution of problem \eqref{problem} provided by Lemma \ref{LocalExistenceLemma}, and $(u,w)$ that of the transformed problem \eqref{AuxiliaryMainsSystem}, $w$ being the function introduced in \eqref{transformationVtoW}. Then we can find a positive constant $L_{2}$ such that 
\begin{equation}\label{BoundOfEnergyFunctional}
\int_\Omega u (\cdot,t)\log u(\cdot,t) \leq L_1t+L_2 \quad\textrm{for all}\quad t \in (0,T_{max}),
\end{equation}
and
 \begin{equation}\label{FirstBoundNablaSqrtuOn(0,T)} 
 \int_0^t\int_\Omega \frac{\lvert \nabla u\rvert ^2}{u}\leq L_1t+L_2 \quad\textrm{for all}\quad t \in (0,T_{max}).
 \end{equation}
Moreover, $L_1(m)$ remains bounded in a neighbourhood  of $m=0$. 
\begin{proof}
With some positive $a \in (a_-,a_+)$, where $a_-$ and $a_+$ will be explicitly computed later,  starting from $\mathfrak{F}$ defined in \eqref{FuntionalUsedForGlobalExistence}, we firstly observe from the inequality $s \log s \geq -\frac{1}{e}$, valid for every $s> 0$, and nonnegativity of $u$ and $w$ that 
\begin{equation}\label{BoundedFromBelowF}
\mathfrak{F}(u,w)\geq -\frac{\lvert \Omega \rvert}{e}\quad \textrm{ on } (0,T_{max}).
\end{equation}
In view of \eqref{AuxiliaryMainsSystem}, a differentiation of $\mathfrak{F}$ and the divergence theorem provide
\begin{equation}\label{DerivativeF_1}
\begin{split}
\frac{d}{d t}\mathfrak{F}& =\int_\Omega (u_t \log u+u_t)+a\int_\Omega u_t w+a\int_\Omega u w_t\\ &
=\int_\Omega  \Delta u \log u +\chi \int_\Omega  \nabla\cdot (u \nabla w) \log u
+a\int_\Omega w \Delta u\\ &\quad +a\chi\int_\Omega w\nabla\cdot (u \nabla w)+a\int_\Omega u \Delta w - a\int_\Omega u \lvert \nabla w\rvert^2+a\int_\Omega uf(u)\\ &
\le-\int_\Omega \frac{\lvert \nabla u \rvert^2}{u}-(\chi+2a)\int_\Omega \nabla u \cdot \nabla w\\ &
\quad- a(\chi+1) \int_\Omega u \lvert \nabla w\rvert^2 +a\int_\Omega u^{\beta+1} \quad \textrm{on}\quad (0,T_{max}).
\end{split}
\end{equation}
Now, on the one hand the Young inequality implies
\begin{equation}\label{DerivativeF_2}
\begin{split}
-(\chi+2a)\int_\Omega \nabla u \cdot \nabla w&\leq a(\chi+1)\int_\Omega u \lvert \nabla w\rvert^2\\ &
\quad +\frac{(\chi+2a)^2}{4a(\chi+1)}\int_\Omega \frac{\lvert \nabla u\rvert^2}{u} \quad \text{on }(0,T_{max}),
\end{split}
\end{equation}
while on the other, again the Gagliardo-Nirenberg inequality \eqref{InequalityTipoG-N}, 
with $\mathfrak{p}=2(\beta+1)$, $\mathfrak{q}=\mathfrak{s}=2$ and $\theta=\frac{\beta}{\beta+1}$, and \eqref{AlgebraicInequality2toalpha}  control the term $a\int_\Omega u^{\beta+1}$ in this form: 
\begin{equation}\label{GagliardoNirenberBoundu^beta+1}
\begin{split}
\int_\Omega u^{\beta+1}&=\lVert \sqrt{u}\rVert^{2(\beta+1)}_{L^{2(\beta+1)}(\Omega)}\\ & \leq 
[C_{GN}(\lVert \nabla \sqrt{u}\rVert^{\theta}_{L^{2}(\Omega)}\lVert \sqrt{u}\rVert^{(1-\theta)}_{L^{2}(\Omega)}+\lVert \sqrt{u}\rVert_{L^{2}(\Omega)})]^{2(\beta+1)} \\ &
\leq (2C_{GN})^{2(\beta+1)}\Big[m\Big(\int_\Omega \frac{\lvert \nabla u \rvert^2}{u}\Big)^\beta+m^{\beta+1}\Big] \quad \textrm{on}\quad (0,T_{max}).
\end{split}
\end{equation} 
Now,  in view of the assumption $0<\chi<1$, the numbers
\[
a_-=\frac{1}{2} - \frac{1}{2}\sqrt{1 - \chi^2}\quad \textrm{and}\quad a_+=\frac{1}{2} + \frac{1}{2}\sqrt{1 - \chi^2}
\]
are real, and for any $a\in(a_-,a_+)$ the constant $c_0=1-\f{(χ+2a)^2}{4a(χ+1)}$ is positive. Hence an application of Young's inequality in \eqref{GagliardoNirenberBoundu^beta+1} shows that on $(0,\Tmax)$ 
\begin{equation}\label{GagliardoNirenberBoundu^beta+1Young}
\begin{split}
\int_\Omega u^{\beta+1}
\leq \frac{c_0}{2a}\int_\Omega \frac{\lvert \nabla u \rvert^2}{u}+c_1m^\frac{1}{1-\beta}+ (2C_{GN})^{2(\beta+1)} m^{\beta+1},
\end{split}
\end{equation} 
where $c_1=(1-\beta) (2C_{GN})^\frac{2(\beta+1)}{1-\beta}(\frac{2 a\beta }{c_0})^\frac{\beta}{1-\beta}$. By inserting \eqref{GagliardoNirenberBoundu^beta+1Young} and \eqref{DerivativeF_2} into \eqref{DerivativeF_1} and setting $c_2(m)=ac_1m^\frac{\beta}{1-\beta}+ a(2C_{GN})^{2(\beta+1)} m^{\beta+1}$ we get
\begin{equation}\label{InequalityForF}
\frac{d}{d t}\mathfrak{F} +\frac{c_0}{2}\int_\Omega \frac{\lvert \nabla u\rvert^2}{u}\leq c_2(m) \quad \textrm{on}\quad  (0,T_{max}),
\end{equation}
and an integration on $(0,t)$, for $t < T_{max}$, in conjunction with the bound from below of $\mathfrak{F}$ (expression \eqref{BoundedFromBelowF}), enables us to arrive at
\begin{equation*}
-\frac{\lvert \Omega \rvert}{e}\leq \mathfrak{F}(t) \leq \mathfrak{F}(0) +c_2(m) t \quad \textrm{for all}\quad t\in (0,T_{max}).
\end{equation*}
Thereupon,  in view of this bound, again an integration of \eqref{InequalityForF} on $(0,t)$, with $t<T_{max}$, leads to 
\[  
\frac{c_0}{2}\int_0^t\int_\Omega \frac{\lvert \nabla u\rvert^2}{u}\leq c_2(m) t+\mathfrak{F}(0)+\frac{\lvert \Omega \rvert}{e} \qquad \text{for } t\in(0,\Tmax),
\]
so that \eqref{BoundOfEnergyFunctional} and \eqref{FirstBoundNablaSqrtuOn(0,T)} are attained with the choices
\[
L_1:=L_1(m):=\max \Big\{c_2(m),\f{2c_2(m)}{c_0}\Big\}, \qquad L_2:=\f2{c_0}\kl{\mathfrak{F}(0)+\f{|\Om|}{e}}.
\]
Finally, since $c_0, c_1$ and $a$ do not depend on $m$, from the expression of $c_2(m)$ above we see that $L_1(m)$ is bounded close to $m=0.$ (More precisely, we even have that $ L_1(m) \searrow 0$ as $m\searrow 0$.)
\end{proof}
\end{lemma}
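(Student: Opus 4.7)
The plan is to exploit the energy functional $\mathfrak F(t) = \io u\log u + a\io u w$ introduced in \eqref{FuntionalUsedForGlobalExistence}, with a parameter $a>0$ to be chosen in a moment. Because $s\log s\ge -1/e$ for $s>0$ and $u,w\ge 0$, $\mathfrak F$ is bounded from below by $-|\Om|/e$, so a differential inequality of the form $\frac{d}{dt}\mathfrak F + c_0\io \f{|\na u|^2}u \le c(m)$ with $c_0>0$ will, after integrating over $(0,t)$, give both of the claimed estimates: the first by combining with the lower bound on $\mathfrak F$, and the second by rearranging and integrating a second time.

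To derive such an inequality I would compute $\frac{d}{dt}\mathfrak F$ along the transformed system \eqref{AuxiliaryMainsSystem}, integrating by parts with the help of the no-flux boundary conditions. Three terms of definite sign emerge: the dissipative $-\io \f{|\na u|^2}u$, the sign-beneficial $-a(\chi+1)\io u|\na w|^2$, and the reaction contribution $a\io u f(u)\le a\io u^{\beta+1}$ coming from $w_t=\Delta w - |\na w|^2 + f(u)$; in addition, a cross term $-(\chi+2a)\io \na u\cdot \na w$ appears. A Young's inequality splits this cross term into a piece that is cancelled exactly by $a(\chi+1)\io u|\na w|^2$ and a piece proportional to $\io \f{|\na u|^2}u$ with coefficient $\f{(\chi+2a)^2}{4a(\chi+1)}$. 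This leaves a net prefactor $c_0 = 1 - \f{(\chi+2a)^2}{4a(\chi+1)}$ in front of $-\io \f{|\na u|^2}u$, and the central observation is that $c_0>0$ is achievable precisely when $a$ lies in a certain interval $(a_-,a_+)$, which is nonempty \emph{because} $\chi<1$; this is the single place in the argument where the hypothesis $\chi<1$ intervenes.

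To handle the super-linear term $\io u^{\beta+1}$ I would invoke the Gagliardo--Nirenberg inequality \eqref{InequalityTipoG-N} applied to $\sqrt u$ with $\mathfrak p = 2(\beta+1)$, $\mathfrak q = \mathfrak s = 2$, $\theta = \beta/(\beta+1)$, yielding an estimate of the shape $\io u^{\beta+1} \le C\bigl(m\kl{\io \f{|\na u|^2}u}^\beta + m^{\beta+1}\bigr)$ in which the gradient enters with exponent $\beta<1$. A second Young's inequality then absorbs the gradient contribution into $\f{c_0}{2a}\io \f{|\na u|^2}u$ at the price of an additive constant $c_2(m)$ that is a sum of strictly positive powers of $m$; in particular $c_2(m)\to 0$ as $m\to 0$, which is exactly what will give the stated boundedness, and even smallness, of $L_1(m)$ near $m=0$.

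The hard part is pinning down the admissibility interval $(a_-,a_+)$ so that $c_0>0$ \emph{independently of} $m$; everything else is routine Young/Gagliardo--Nirenberg bookkeeping. Once the differential inequality $\frac{d}{dt}\mathfrak F + \f{c_0}{2}\io \f{|\na u|^2}u \le c_2(m)$ is in place on $(0,\Tmax)$, an integration on $(0,t)$ together with $\mathfrak F\ge -|\Om|/e$ yields \eqref{BoundOfEnergyFunctional} directly (using also $\io u w\ge 0$), and a further rearrangement-plus-integration delivers \eqref{FirstBoundNablaSqrtuOn(0,T)}; $L_1$ will come out of the form $\max\{c_2(m),2c_2(m)/c_0\}$, and $L_2$ proportional to $\mathfrak F(0)+|\Om|/e$, both absorbing the initial data.
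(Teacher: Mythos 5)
Your proposal follows the paper's proof essentially verbatim: the same functional $\mathfrak F$, the same Young-inequality cancellation of the cross term against $a(\chi+1)\io u|\na w|^2$ leading to $c_0=1-\f{(\chi+2a)^2}{4a(\chi+1)}>0$ for $a\in(a_-,a_+)$ (nonempty precisely because $\chi<1$), the same Gagliardo--Nirenberg step for $\io u^{\beta+1}$ with $\theta=\beta/(\beta+1)$ followed by Young's inequality exploiting $\beta<1$, and the same double integration against the lower bound $\mathfrak F\ge -|\Om|/e$. This is correct and matches the paper's argument, including the observation that $c_2(m)$, and hence $L_1(m)$, tends to $0$ as $m\searrow 0$.
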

\begin{remark}
 Even though the functional $\mathfrak{F}$ has not been considered in \cite{win_ct_sing_abs_eventual} (nor in \cite{win_ct_sing_abs}), global existence of small-mass solutions in the case of $β=1$ (and $χ<1$) can be recovered from the above considerations: 
If we insert $β=1$ into \eqref{GagliardoNirenberBoundu^beta+1} and refrain from using Young's inequality, the remainder of the proof still is applicable, provided that $(2C_{GN})^4m<\f{c_0}{2a}$. 
\end{remark}

As a consequence of all of the above, we have the necessary ingredients to prove Theorem \ref{MainTheorem}.
\subsubsection*{Proof of Theorem \ref{MainTheorem}} 
Let $(u,w)$ be the local-in-time classical solution of problem \eqref{AuxiliaryMainsSystem} provided by Lemma \ref{LocalExistenceLemma}; clearly the function $w$ also solves
 \begin{equation*}
 w_t \leq \Delta w +f(u)  \qquad \text{in } \Omega \times (0,T_{max}),
 \end{equation*}
and therefore by using a representation formula and \eqref{cond:f} we get 
 \begin{equation*}
 w(\cdot,t)\leq e^{t \Delta}w_0+\int_0^t e^{(t-s)\Delta} u^\beta(\cdot,s) ds \quad \text{ in } \Om \text{ for any } t\in(0,\Tmax).
 \end{equation*}
 Now we invoke a standard estimate for the Neumann heat semigroup (see  \cite[Lemma 1.3]{WinklAggre}) which warrants the existence of a positive constant $C_S$  such that for all $t>0$ 
 \begin{equation*} 
 \lVert  e^{t\Delta} φ  \lVert_{L^\infty(\Omega)}\leq C_S (1+t^{-\frac{1}{p}}) \lVert φ\lVert_{L^p(\Omega)}\quad \textrm{for all} \quad φ\in L^p(\Omega),
 \end{equation*}
 so that its application with $p=\frac{2}{\beta}$, in conjunction with the Young inequality, provides 
 \begin{equation}\label{LInftyBoundW}
\begin{split}
\lVert w (\cdot,t)\rVert_{L^\infty(\Omega)}& \leq \lVert e^{t \Delta}w_0 \rVert_{L^\infty(\Omega)}+\int_0^t  \lVert e^{(t-s)\Delta} u^{β} \rVert_{L^\infty(\Omega)}ds  \\ & 
\leq  \lVert w_0 \rVert_{L^\infty(\Omega)}+C_S\int_0^t (1+(t-s)^{-\frac{\beta}{2}})\bigg(\int_\Omega u^2\bigg)^\frac{\beta}{2}ds \\ &
 \leq  \lVert w_0 \rVert_{L^\infty(\Omega)}+\frac{C_S}{2}\int_0^t(1+(t-s)^{-\frac{\beta}{2}})^2 ds\\ &
 \quad +\frac{C_S}{2}\int_0^t \bigg(\int_\Omega u^2\bigg)^\beta ds \\ &
 \leq  \lVert w_0 \rVert_{L^\infty(\Omega)}+\frac{C_S}{2}t+\frac{C_S}{2(1-\beta)}t^{1-\beta}+\frac{2C_S}{2-\beta}t^\frac{2-\beta}{2}\\ &
 \quad 
 +\frac{C_S}{2}\beta \int_0^t\int_\Omega u^2+\frac{C_S}{2}(1-\beta)t,\quad t\in(0,T_{max}).
 \end{split}
 \end{equation}
According to Lemma \ref{lem:estimate:L2:spacetime} and Lemma \ref{LemmaControllinguLoguAndnablauOveru}, 
we can write 
 \begin{equation*}
\int_0^t\int_\Omega u^2\leq C_1(1+t)  \quad t<T_{max},
 \end{equation*}
 where 
 \begin{equation*} 
C_1=C_1(m)=m (2C_{GN})^4(L_1(m)+m),
 \end{equation*}
with $L_1(m)>0$ from Lemma \ref{LemmaControllinguLoguAndnablauOveru}. Thereafter, \eqref{LInftyBoundW} becomes
 \begin{equation*} 
 \begin{split}
 \lVert w \rVert_{L^\infty(\Omega)}& \leq  \lVert w_0 \rVert_{L^\infty(\Omega)}+\frac{C_S}{2}t+\frac{C_S}{2(1-\beta)}t^{1-\beta}\\ &
  \quad +\frac{2C_S}{2-\beta}t^\frac{2-\beta}{2}
  +\frac{C_S}{2}\beta C_1(1+t)+\frac{C_S}{2}(1-\beta)
  \\ &
  \leq L_{3}(m)(1+t),\quad  t<T_{max},
  \end{split}
  \end{equation*}
where
\[
L_{3}(m):=\f{C_S}{2(1-β)}+\f{C_SC_1β}2+\f{2C_S}{2-β}+\max\bigg\{\norm[L^\infty(\Om)]{w_0}+\f{C_S}2(1-β),\f{C_S}2\bigg\}.
\]
Once the bounds \eqref{BoundOfEnergyFunctional},  \eqref{FirstBoundNablaSqrtuOn(0,T)}   and \eqref{LInftyBoundW} are considered, the conclusion is then a direct consequence of Lemma \ref{LemmaToEnsureTisInfty} with $C=\max\{L_1(m),L_2,L_3(m)\}$. 
 \qed
 \section{Deriving boundedness of global solutions: proof of Theorem \ref{MainTheoremBoundedness}}\label{sec:boundedness}
 Now that the global existence of solutions to \eqref{problem} is guaranteed, let us dedicate this section to the boundedness question: as we shall show in the sequel, this issue will be addressed if some smallness assumption on the initial mass $m$ is given.
 
 In particular, the boundedness of $u$ is achieved by controlling the quantity $\int_\Omega \lvert \nabla w\rvert^p$ for some $p>2$, as specified in this 
 \begin{lemma}\label{LemmaFromnablawpower4ToBoundedness}
 Assuming \eqref{assumptions}, let $m>0$, $p>2$, $K>0$ and $τ>0$. Then there is a positive constant $C=C(p,m,K,τ)$ such that for any initial data $(u_0,v_0)$ as in \eqref{initdata} with $\io u_0=m$, the solution $(u,w)$ provided by Theorem \ref{MainTheorem} and \eqref{transformationVtoW} satisfies the following: If 
 \begin{equation}\label{condonnablaw}
\int_\Omega \lvert \nabla w(\cdot,t)\rvert^p \leq K \quad \textrm{for all}\quad t>t_0,
 \end{equation}
 holds with some $t_0\ge 0$, then 
 \begin{equation}\label{givesboundednessofu}
 \lVert u(\cdot,t)\rVert_{L^\infty(\Omega)}\leq C \quad \textrm{for all}\quad t> t_0+τ.
 \end{equation}
 \begin{proof}
This is \cite[Lemma 4.4]{win_ct_sing_abs_eventual}, since only the first equation of the system is of importance here. We therefore only indicate the main steps, referring to \cite[Lemma 4.4]{win_ct_sing_abs_eventual} for details. 
Using that by Hölder's inequality and \eqref{condonnablaw} for $q\in(2,p)$ we have $\norm[\Lom q]{u(\cdot,s)\nabla w(\cdot,s)} \le m^{1-a} K^{\f1p} \norm[\Lom\infty]{u(\cdot,s)}^a$ with $a=1-\f{p-q}{pq}$, from semigroup estimates we can infer that with some $c_1, c_2>0$ 
\begin{align}\label{eq:boundednesssgestimate}
 \norm[\Lom\infty]{u(\cdot,t)} & \le c_1 m (t-t_0)^{-1} + c_2 m^{1-a} K^{\f1p} \int_{t_0}^t (t-s)^{-(\f12+\f1q)} \norm[\Lom\infty]{u(\cdot,s)}^a ds\nn\\
 &\le c_1 m (t-t_0)^{-1} + c_2 m^{1-a} K^{\f1p}c_3 S_1^a, \qquad t\in(t_0,t_0+1), 
\end{align}
where $S_1:=\max\set{(t-t_0)\norm[\Lom\infty]{u(\cdot,t)} \mid t\in[t_0,t_0+1]}$ and $c_3:=\int_0^1 (1-σ)^{-(\f12+\f1q)}σ^{-a} dσ<\infty$.
Multiplication of \eqref{eq:boundednesssgestimate} by $(t-t_0)$ shows that we can find an explicit expression of $c_4>0$ such that $S_1\le c_4$ (see also \cite[(4.31)]{win_ct_sing_abs_eventual}). For $T>t_0$ and $t\in[t_0+1,T)$ we similarly derive that (no matter whether $t-1<t_0+1$ or not) 
\begin{align*}
 \norm[\Lom\infty]{u(\cdot,t)} & \le c_1m+c_2m^{1-a} K^{\f1p} \int_{t-1}^t (t-s)^{-(\f12+\f1q)} \norm[\Lom\infty]{u(\cdot,s)}^a ds\\
 &\le c_1m + c_2m^{1-a} K^{\f1p}\left[c_3S_1^a + \f{2q}{q-2} S_2^a(T)\right], 
\end{align*}
where $S_2(T):=\max\set{\norm[\Lom\infty]{u(\cdot,t)}\mid t\in[t_0+1,T]}$.  In particular, with analogous arguments employed to derive the uniform bound for $S_1$, we can find $c_5>0$ such that $S_2(T)\leq c_5$. Finally, these estimates entail that $S(T):=\max\set{S_1,S_2(T)}$ is bounded, independently of $T$, proving \eqref{givesboundednessofu}, with, for instance, $C=\max\{c_4,\frac{c_4}{τ},c_5\}$.
\end{proof}

\end{lemma}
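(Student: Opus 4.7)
The plan is to exploit the fact that once $\nabla w$ is controlled in $L^p$ with $p>2$, the first equation of \eqref{AuxiliaryMainsSystem}, namely $u_t=\Delta u+\chi\nabla\cdot(u\nabla w)$, becomes a scalar parabolic equation whose drift admits useful $L^q$ bounds, so standard Neumann heat semigroup estimates and a $\|u\|_{L^\infty}$-self-referential bootstrap will close the argument. Concretely, I would start from the variation-of-constants representation
\[
u(\cdot,t)=e^{(t-t_0)\Delta}u(\cdot,t_0)+\chi\int_{t_0}^{t}e^{(t-s)\Delta}\nabla\cdot\kl{u(\cdot,s)\nabla w(\cdot,s)}\,ds, \qquad t>t_0,
\]
and apply on the first term the smoothing estimate $\norm[L^\infty(\Om)]{e^{σ\Delta}\varphi}\le c(1+σ^{-1})\norm[L^1(\Om)]{\varphi}$, which together with the mass conservation $\io u(\cdot,t_0)=m$ gives the $c_1 m(t-t_0)^{-1}$ contribution.

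For the nonlinear term I would pick $q\in(2,p)$ and use the gradient Neumann semigroup estimate $\norm[L^\infty(\Om)]{e^{σ\Delta}\nabla\cdot\psi}\le c\,(1+σ^{-\f12-\f1q})\norm[L^q(\Om)]{\psi}$, combined with the Hölder bound
\[
\norm[L^q(\Om)]{u(\cdot,s)\nabla w(\cdot,s)}\le \norm[L^{\f{pq}{p-q}}(\Om)]{u(\cdot,s)}\,\norm[L^p(\Om)]{\nabla w(\cdot,s)}\le K^{\f1p}\,\norm[L^{\f{pq}{p-q}}(\Om)]{u(\cdot,s)}.
\]
Interpolating $\norm[L^{\f{pq}{p-q}}(\Om)]{u}\le m^{1-a}\norm[L^\infty(\Om)]{u}^{a}$ with $a=1-\f{p-q}{pq}$ feeds a power $a<1$ of $\norm[L^\infty(\Om)]{u}$ into the integral, which is exactly the feature that lets the bootstrap close.

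The remaining step is the standard splitting into $[t_0,t_0+1]$ and $[t_0+1,T]$. On the short interval one multiplies the resulting inequality by $(t-t_0)$ and takes the supremum to obtain $S_1:=\sup_{t\in[t_0,t_0+1]}(t-t_0)\norm[L^\infty(\Om)]{u(\cdot,t)}\le c_1m+c_2m^{1-a}K^{1/p}c_3 S_1^{a}$; since $a<1$, Young's inequality yields an explicit bound $S_1\le c_4$. On $[t_0+1,T)$ one integrates over $(t-1,t)$ only, picks up the $S_1$-contribution near the left endpoint and an $S_2(T):=\sup_{[t_0+1,T]}\norm[L^\infty(\Om)]{u}$-contribution from the bulk, and then the same $a<1$ argument gives $S_2(T)\le c_5$ uniformly in $T$. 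Taking $\tau\in(0,1]$ first and then iterating for $\tau>1$ via $S_2$ yields \eqref{givesboundednessofu}, with $C$ explicitly depending only on $p,m,K,\tau$.

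The main obstacle is the bookkeeping of exponents: one must simultaneously ensure that $q>2$ (to avoid endpoint failure of the gradient semigroup estimate), that $\f12+\f1q<1$ and $a<1$ (so both $c_3$ and the Young trick work), and that the interpolation exponent $a$ is indeed $1-\f{p-q}{pq}\in(0,1)$ for some admissible $q\in(2,p)$. Since $p>2$ is assumed, any $q$ slightly above $2$ works, which is the standard choice. Beyond this, the argument is a direct transcription of \cite[Lemma 4.4]{win_ct_sing_abs_eventual}, which is why only the main steps need to be indicated.
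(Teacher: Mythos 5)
Your proposal is correct and follows essentially the same route as the paper's proof (itself a sketch of \cite[Lemma 4.4]{win_ct_sing_abs_eventual}): variation of constants, Neumann heat semigroup smoothing, the H\"older--interpolation bound with exponent $a=1-\frac{p-q}{pq}<1$, and the $S_1$/$S_2$ bootstrap on $[t_0,t_0+1]$ and $[t_0+1,T)$. The extra details you supply (the explicit semigroup estimates and the origin of the exponents) are exactly the steps the paper leaves to the cited reference, so nothing further is needed.
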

In view of this crucial result, our final aim is to provide conditions capable of justifying eventual bounds for $\nabla w$ in $L^p(\Omega)$, with some $p>2$. This will be achieved by means of the forthcoming derivations, most of them tied to properties of the functional
 \begin{equation}\label{GFunctionalBoundedness}
\mathcal{G}:= \mathcal{G}(t):=\mathcal{G}(u(\cdot,t),w(\cdot,t)) =\frac{1}{2}\int_\Omega \lvert \nabla w(\cdot,t)\rvert^2+ \int_\Omega H(u(\cdot,t)),
 \end{equation}
 defined for any $t>0$ and associated to the global classical solution $(u,w)$ of \eqref{AuxiliaryMainsSystem}, where 
\[
 H(ξ):=-\f1{χ} \int_0^{ξ}\int_s^{∞} \f{f'(σ)}{σ}dσ ds.
\]
Further, let us remark that $\mathcal{G}$ generalizes the functional employed in \cite{win_ct_sing_abs_eventual} and \cite{black} for $f(u)\equiv u$. 
 \begin{lemma}\label{LemmaEvolutionG}
 Assuming \eqref{assumptions}, \eqref{cond:fprime} and \eqref{initdata},
 let $(u,w)$ be the global classical solution of problem \eqref{AuxiliaryMainsSystem} provided by Theorem \ref{MainTheorem}  and \eqref{transformationVtoW}. Then 
 \begin{equation}\label{nabla_w^2BoundedByG} 
\int_\Omega \lvert \nabla w\rvert^2\leq 2\mathcal{G}(u,w) + \frac{2 m^\beta\lvert \Omega \rvert^{1-\beta} }{\chi(1-\beta)}\quad \text{on } (0,\infty)
 \end{equation}
 and
  \begin{equation}\label{G_UpperBound}
 \mathcal{G}(u(\cdot,t),w(\cdot,t)) \leq \frac{1}{2}\int_\Omega \lvert \nabla w (\cdot,t) \vert^2 \quad \textrm{for all}\quad t>0. 
  \end{equation}
  Moreover, we also have that 
    \begin{equation}\label{FirstTimeDeriivativeG}
    \begin{split}
\frac{d}{dt}\mathcal{G}(u,w)&+\f1{χ}\io \f{f'(u)|\na u|^2}{u}\\ & +\frac{1}{2}\Big(1-C_{GN}\int_\Omega \lvert \nabla w \rvert^2\Big) \int_\Omega (\Delta w)^2\leq 0\quad \textrm{on }\quad (0,\infty), 
    \end{split}
    \end{equation}
    where $C_{GN}$ is the constant introduced in Lemma  \ref{InequalityG-NLemma}.
 \begin{proof}
Due to \eqref{cond:fprime}, the function $H$ is nonpositive  on $(0,∞)$ and can be estimated by 
\[
 -H(ξ) \le \f1{χ} \int_0^{ξ}\int_s^{∞} βσ^{β-2} dσds = \f{1}{χ(1-β)}ξ^{β} \qquad \text{for all } ξ>0, 
\]
 so that Hölder's inequality implies 
 \begin{equation*}
 \begin{split}
 \frac{1}{2}\int_\Omega \lvert \nabla w\rvert^2=\mathcal{G}(u,w)- \int_\Omega H(u)\leq \mathcal{G}(u,w)+\frac{m^\beta \lvert \Omega  \rvert^{1-\beta}  }{\chi(1-\beta)},
  \end{split}
 \end{equation*}
 which warrants  \eqref{nabla_w^2BoundedByG}, whilst \eqref{G_UpperBound} is an easy consequence of the definition of $\mathcal{G}$ and nonpositivity of $H$. 
 
 Additionally, thanks to the first equation of \eqref{AuxiliaryMainsSystem} and the mass conservation property, i.e. $\int_\Omega u_t \equiv 0$ for all $t>0$, we have that 
 \begin{equation*}
 \begin{split}
 \frac{d}{dt}\int_\Omega H(u)&=\int_\Omega H'(u)(\Delta u +\chi \nabla \cdot (u\nabla w)) \\& 
=-\int_\Omega H''(u)\lvert \nabla u\rvert^2 - χ\int_\Omega H''(u)u\nabla u \cdot \nabla w\\
 &=-\f1{χ}\io \f{f'(u)|\na u|^2}u - \io f'(u)\na u\cdot \na w \quad \text{on } (0,∞).
\end{split}
 \end{equation*}
Moreover, from the second equation of \eqref{AuxiliaryMainsSystem}, the Young inequality and relation \eqref{LadyzhenskayaInequality}, we achieve on $(0,∞)$ that
 \begin{equation*}
  \begin{split}
  \frac{d}{dt}\frac{1}{2}\int_\Omega \lvert \nabla w\rvert^2&+\int_\Omega (\Delta w)^2=\int_\Omega \lvert \nabla w \rvert^2\Delta w+  \int_\Omega f'(u)\nabla u \cdot \nabla w \\& 
  \leq \frac{1}{2} \int_\Omega (\Delta w)^2+\frac{1}{2}\int_\Omega \lvert \nabla w\rvert^4 + \int_\Omega f'(u)\nabla u \cdot \nabla w 
  \\& 
    \leq \frac{1}{2} \int_\Omega (\Delta w)^2+\frac{ C_{GN}}{2}\int_\Omega \lvert \nabla w\rvert^2\int_\Omega (\Delta w)^2 +   \int_\Omega f'(u)\nabla u \cdot \nabla w, \\&  
 \end{split}
  \end{equation*}
  so that  by adding the latest two relations 
 we can conclude. 
 \end{proof}
 \end{lemma}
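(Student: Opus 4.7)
My plan is to dispose of the two easier bounds first and then focus on the energy identity \eqref{FirstTimeDeriivativeG}. For \eqref{G_UpperBound}, assumption \eqref{cond:fprime} gives $f'\ge 0$, so the integrand defining $-\chi H$ is pointwise nonnegative, hence $H\le 0$ on $(0,\infty)$. Consequently $\io H(u)\le 0$ and the bound follows immediately from the definition of $\mathcal{G}$. For \eqref{nabla_w^2BoundedByG} I will insert the pointwise bound $f'(\sigma)\le \beta\sigma^{\beta-1}$ and compute explicitly
\[
 -H(\xi) \le \f{1}{\chi}\int_0^\xi \int_s^\infty \beta\sigma^{\beta-2}\,d\sigma\,ds = \f{\xi^\beta}{\chi(1-\beta)},
\]
where $\beta<1$ ensures convergence of the inner integral at infinity. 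Hölder's inequality combined with the mass conservation \eqref{Bound_of_u} then yields $\io u^\beta\le m^\beta |\Om|^{1-\beta}$, and rearranging $\mathcal{G}=\tfrac12\io|\na w|^2+\io H(u)$ gives \eqref{nabla_w^2BoundedByG}.

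The core of the lemma is the differential inequality; the guiding idea is that $H$ is constructed precisely so as to cancel the unwanted cross-term. I will record $H'(\xi) = -\f{1}{\chi}\int_\xi^\infty \f{f'(\sigma)}{\sigma}\,d\sigma$ and $H''(\xi)= \f{f'(\xi)}{\chi\xi}$, and then differentiate the two pieces of $\mathcal{G}$ separately. Using the first equation of \eqref{AuxiliaryMainsSystem} and integrating by parts twice (boundary contributions vanish by the no-flux condition), I expect
\[
 \f{d}{dt}\io H(u) = -\f{1}{\chi}\io \f{f'(u)|\na u|^2}{u} - \io f'(u)\na u\cdot\na w.
\]
On the other hand, differentiating $\tfrac12\io|\na w|^2$, substituting $w_t=\Delta w-|\na w|^2+f(u)$ from the second equation, and integrating $-\io f(u)\Delta w$ by parts gives
\[
 \f{d}{dt}\f12\io|\na w|^2 = -\io(\Delta w)^2 + \io|\na w|^2\Delta w + \io f'(u)\na u\cdot\na w.
\]
Adding the two identities, the terms $\pm\io f'(u)\na u\cdot\na w$ cancel exactly, which is exactly the motivation behind choosing $H$ as a second primitive of $\sigma\mapsto f'(\sigma)/(\chi\sigma)$.

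It remains to control the single delicate term $\io|\na w|^2\Delta w$. Young's inequality bounds it by $\tfrac12\io(\Delta w)^2+\tfrac12\io|\na w|^4$, and the Ladyzhenskaya-type inequality \eqref{LadyzhenskayaInequality} yields $\io|\na w|^4\le C_{GN}\io|\na w|^2\cdot \io(\Delta w)^2$. Assembling these estimates produces precisely the prefactor $\tfrac12(1-C_{GN}\io|\na w|^2)$ in front of $\io(\Delta w)^2$ in \eqref{FirstTimeDeriivativeG}. The only subtlety I anticipate is the legitimacy of the integration by parts involving $H''(u)=f'(u)/(\chi u)$, since the integrand appears singular at $u=0$; however, parabolic smoothing gives smoothness of $u$ on $(0,T_{max})$ and a strong maximum principle argument keeps $u$ strictly positive for $t>0$ (assuming $u_0\not\equiv 0$, the trivial case being uninteresting), so no genuine difficulty arises.
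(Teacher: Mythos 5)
Your proposal is correct and follows essentially the same route as the paper: the same bound $-H(\xi)\le \xi^{\beta}/(\chi(1-\beta))$ with Hölder for \eqref{nabla_w^2BoundedByG}, nonpositivity of $H$ for \eqref{G_UpperBound}, and the same cancellation of the cross terms $\pm\io f'(u)\nabla u\cdot\nabla w$ after testing the two equations, followed by Young and \eqref{LadyzhenskayaInequality} to produce the factor $\tfrac12\bigl(1-C_{GN}\io|\nabla w|^2\bigr)$. Your extra remark on the admissibility of the integration by parts despite the singularity of $H''(u)=f'(u)/(\chi u)$ at $u=0$ (via positivity of $u$ for $t>0$) is a small refinement the paper leaves implicit.
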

 The next result will be employed in the sequel to establish eventual boundedness of the term $\int_\Omega \lvert \nabla w\rvert^2$, such an estimate being strongly necessary  to our purposes.
 \begin{lemma}\label{LemmadefiningValueGTzer0}   
 Assuming \eqref{assumptions}, \eqref{cond:fprime} and \eqref{initdata},
let $(u,w)$ be the global classical solution of problem \eqref{AuxiliaryMainsSystem} provided by Theorem \ref{MainTheorem} and \eqref{transformationVtoW}. Moreover, let $C_{GN}$ be the constant from Lemma  \ref{InequalityG-NLemma}. If there exists $t_0\geq 0$ such that the functional $\mathcal{G}$ defined in \eqref{GFunctionalBoundedness} satisfies
 \begin{equation}\label{SmallnessAssumptionOnFunctionalGFort0}
\mathcal{G}(t_0)< \frac{1}{4 C_{GN} }-\frac{m^\beta\lvert \Omega \rvert^{\beta-1}}{\chi (1-\beta)},
 \end{equation}
  then 
  \begin{equation*} 
  \mathcal{G}'(t)\leq 0\quad \textrm{for all}\quad t>t_0.
  \end{equation*}
 \begin{proof}
As in \cite[ Lemma 3.4]{win_ct_sing_abs_eventual}, by taking into consideration  \eqref{nabla_w^2BoundedByG} and assumption \eqref{SmallnessAssumptionOnFunctionalGFort0} we see that 
\begin{equation*}
\begin{split}
\int_\Omega \lvert \nabla w(\cdot,t_0) \rvert^2\leq 2\mathcal{G}(t_0)+2 \frac{m^\beta\lvert \Omega \rvert^{\beta-1}}{\chi (1-\beta)}&< \frac{2}{4 C_{GN}}-2 \frac{m^\beta\lvert \Omega \rvert^{\beta-1}}{\chi (1-\beta)}+2 \frac{m^\beta\lvert \Omega \rvert^{\beta-1}}{\chi (1-\beta)}\\ & = \frac{1}{2 C_{GN}},
\end{split}
\end{equation*}
so that the set 
\begin{equation*}
S:=\left\{t\geq t_0 \;\bigl\rvert\;  C_{GN}\int_\Omega \lvert \nabla w(\cdot,τ) \rvert^2<\frac{1}{2}\quad \textrm{for all}\quad τ\in(t_0,t)\right\}
\end{equation*}
is not empty; more precisely, we aim to show that $T:=\sup S=\infty$. Indeed, if $T$ was finite, from the continuity of $t\mapsto \int_\Omega \lvert \nabla w(\cdot,t)\rvert^2$ we would necessarily have that 
\begin{equation}\label{ForContradictionAssumption}
C_{GN}\int_\Omega \lvert \nabla w(\cdot,T) \rvert^2=\frac{1}{2}.
\end{equation}
On the other hand, from \eqref{FirstTimeDeriivativeG} and in view of the nonnegativity of $f'$ due to \eqref{cond:fprime} it is inferred that 
    \begin{equation*}
    \begin{split}
\mathcal{G}'(t)\le -\f12\kl{1-C_{GN}\io |\na w(\cdot,t)|^2} \io |Δw(\cdot,t)|^2 \le -\f14 \io |Δw(\cdot,t)|^2 \le 0
    \end{split}
    \end{equation*}
for all $t\in[t_0,T)$, so that $\mathcal{G}(T)\le \mathcal{G}(t_0)$
    and, again by virtue of \eqref{nabla_w^2BoundedByG} and \eqref{SmallnessAssumptionOnFunctionalGFort0}, 
    \begin{equation*}
\int_\Omega \lvert \nabla w(\cdot,T)\rvert^2\leq 2\mathcal{G}(T)+2 \frac{m^\beta\lvert \Omega \rvert^{\beta-1}}{\chi (1-\beta)}\leq  2 \mathcal{G}(t_0)+2 \frac{m^\beta\lvert \Omega \rvert^{\beta-1}}{\chi (1-\beta)}<\frac{1}{2 C_{GN}},
    \end{equation*}
which contradicts \eqref{ForContradictionAssumption}; then $T=\infty$ and the proof is given. 
 \end{proof}
 \end{lemma}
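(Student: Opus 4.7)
The plan is to argue by contradiction using a continuity/bootstrap argument that exploits the differential inequality \eqref{FirstTimeDeriivativeG} together with the coercivity-type bound \eqref{nabla_w^2BoundedByG}. The strictness gap built into the smallness hypothesis \eqref{SmallnessAssumptionOnFunctionalGFort0} will provide exactly the margin needed to keep the bootstrap self-sustaining.

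First I would combine the smallness hypothesis on $\mathcal{G}(t_0)$ with \eqref{nabla_w^2BoundedByG} to conclude that $\int_\Omega |\nabla w(\cdot,t_0)|^2 < \frac{1}{2C_{GN}}$. This places us in the regime where the coefficient $1-C_{GN}\int_\Omega|\nabla w|^2$ appearing on the left of \eqref{FirstTimeDeriivativeG} is strictly positive. Accordingly, I would introduce the open set
\[
S:=\Bigl\{\,t>t_0 \;\Bigl|\; C_{GN}\int_\Omega |\nabla w(\cdot,\tau)|^2 < \tfrac{1}{2}\text{ for all }\tau\in(t_0,t)\,\Bigr\},
\]
and set $T:=\sup S$. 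By continuity of $t\mapsto \int_\Omega|\nabla w(\cdot,t)|^2$ together with the strict inequality at $t_0$, we have $T>t_0$, and the goal becomes to show $T=\infty$.

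Suppose for contradiction that $T<\infty$. Then continuity forces $C_{GN}\int_\Omega |\nabla w(\cdot,T)|^2=\frac{1}{2}$. On the other hand, throughout $[t_0,T)$ the defining inequality for $S$ makes the parenthesis in \eqref{FirstTimeDeriivativeG} positive; combined with nonnegativity of $f'$ (guaranteed by \eqref{cond:fprime}), which renders the $f'(u)|\nabla u|^2/u$ contribution nonnegative and therefore only helpful, we deduce $\mathcal{G}'(t)\le 0$ on $[t_0,T)$. Hence $\mathcal{G}(T)\le\mathcal{G}(t_0)$, and feeding this back into \eqref{nabla_w^2BoundedByG} together with the strict hypothesis \eqref{SmallnessAssumptionOnFunctionalGFort0} yields
\[
\int_\Omega|\nabla w(\cdot,T)|^2 \;\le\; 2\mathcal{G}(t_0)+\frac{2m^\beta|\Omega|^{\beta-1}}{\chi(1-\beta)} \;<\; \frac{1}{2C_{GN}},
\]
in contradiction with the equality forced by continuity. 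Thus $T=\infty$, and $\mathcal{G}'(t)\le 0$ for every $t>t_0$.

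The only subtle point is the interplay between \eqref{nabla_w^2BoundedByG} and \eqref{SmallnessAssumptionOnFunctionalGFort0}: the constant $\frac{m^\beta|\Omega|^{\beta-1}}{\chi(1-\beta)}$ subtracted in the hypothesis is precisely what \eqref{nabla_w^2BoundedByG} adds back, so after one round of "monotonicity $\Rightarrow$ bound on $\int|\nabla w|^2$" we recover a bound strictly below the critical threshold $\frac{1}{2C_{GN}}$. This is exactly what makes the bootstrap close rather than stalling at the threshold, and is the essential reason for the precise form of the smallness condition.
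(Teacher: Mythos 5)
Your proposal is correct and follows essentially the same route as the paper: the same contradiction/bootstrap argument via the set $S$, continuity of $t\mapsto\int_\Omega|\nabla w(\cdot,t)|^2$, dropping the nonnegative $f'$-term in \eqref{FirstTimeDeriivativeG} to get $\mathcal{G}'\le 0$ up to the putative first exit time $T$, and then closing the loop with \eqref{nabla_w^2BoundedByG} and the strict smallness hypothesis to contradict the threshold equality at $T$. No gaps; your closing remark about the cancellation of the $\frac{m^\beta|\Omega|^{\beta-1}}{\chi(1-\beta)}$ term is precisely the mechanism the paper exploits.
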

 \begin{lemma}\label{lemmaEstimateDerivativeusquare}  
Assuming \eqref{assumptions} and \eqref{initdata}, let $(u,w)$ be the global classical solution of problem \eqref{AuxiliaryMainsSystem} provided by  Theorem \ref{MainTheorem}  and \eqref{transformationVtoW}. Then for any positive $\varepsilon_1$ we have that  
\begin{equation}\label{timeDerivative_u^2}
\frac{d}{dt}\int_\Omega u^2 +\int_\Omega \lvert \nabla u\rvert^2 \leq \varepsilon_1\int_\Omega \lvert \nabla w\rvert^6+D_1(\varepsilon_1) \int_\Omega u^3 \quad \text{on } (0,∞)
\end{equation}
where $D_1(\varepsilon_1)=\frac{\chi^3}{3}(6\varepsilon_1)^{-\frac{1}{2}}$.
\begin{proof}
By multiplying the first equation of problem \eqref{AuxiliaryMainsSystem} by $u$, an integration by parts implies
\begin{equation}\label{thepreviousequality}
\frac{1}{2}\frac{d}{dt}\int_\Omega u^2+\int_\Omega \lvert \nabla u\rvert^2=-\chi \int_\Omega u \nabla u \cdot \nabla w \quad\text{on } (0,∞).
\end{equation}
The claim is obtained once we use in \eqref{thepreviousequality} that
\begin{equation*}
-\chi \int_\Omega u \nabla u \cdot \nabla w\leq \frac{1}{2}\int_\Omega \rvert \nabla u\rvert^2 + \varepsilon_1\int_\Omega \lvert \nabla w\rvert^6+D_1(\varepsilon_1) \int_\Omega u^3\quad \text{on } (0,\infty), 
\end{equation*}
achieved thanks to two applications of the Young inequality, the first with exponents $\frac{1}{2}$ and $\frac{1}{2}$ and the second with $\frac{1}{3}$ and $\frac{2}{3}.$
\end{proof}
 \end{lemma}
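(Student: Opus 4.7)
The plan is to apply the standard energy method: test the first equation of the transformed system \eqref{AuxiliaryMainsSystem} against $u$ itself. Multiplying $u_t=\Delta u+\chi\nabla\cdot(u\nabla w)$ by $u$, integrating over $\Omega$, and using the homogeneous Neumann condition on both $u$ and $w$ (so that no boundary contribution survives the two integrations by parts), I would arrive at
\begin{equation*}
 \f12\f{d}{dt}\io u^2 + \io |\na u|^2 = -χ\io u\,\na u\cdot\na w \qquad \text{on }(0,\infty).
\end{equation*}
The remainder of the proof is devoted exclusively to estimating the right-hand side by the two terms that appear in \eqref{timeDerivative_u^2}, plus the $\f12\io|\na u|^2$ that needs to be absorbed on the left.

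I would proceed via a sequence of two Young's inequalities. First, a balanced Young's inequality with exponents $\f12,\f12$ produces
\begin{equation*}
 -χ\,u\,\na u\cdot\na w \le \f12|\na u|^2 + \f{χ^2}{2} u^2|\na w|^2,
\end{equation*}
which absorbs half of the $\io|\na u|^2$ term into the dissipation on the left. The remaining product $u^2|\na w|^2$ then has to be split so that $|\na w|$ carries an overall power of $6$ and $u$ a power of $3$; that dictates using Young's inequality with exponents $3$ and $\f32$ in the form $u^2|\na w|^2=(\lambda|\na w|^2)(\lambda^{-1}u^2)\le\f{\lambda^3}{3}|\na w|^6+\f{2}{3\lambda^{3/2}}u^3$, where the free parameter $\lambda>0$ is chosen so that the coefficient of $|\na w|^6$ equals exactly $\varepsilon_1$. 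This forces $\lambda=(6\varepsilon_1/χ^2)^{1/3}$ and produces the prescribed constant $D_1(\varepsilon_1)=\f{χ^3}{3}(6\varepsilon_1)^{-1/2}$ in front of $\io u^3$.

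I do not expect any genuine obstacle here: after the Hopf--Cole-type transformation \eqref{transformationVtoW} the singular factor $v^{-1}$ has disappeared, and the cross-diffusive term $+χ\nabla\cdot(u\na w)$ is now smooth wherever $u$ and $w$ are, so the $L^2$ testing procedure goes through unimpeded. The only substantive choice is how to split the cross term via Young's inequality, and the statement itself already indicates the intended splitting by making $\varepsilon_1$ a free parameter in front of $\io|\na w|^6$. The $\varepsilon_1^{-1/2}$ blow-up of $D_1(\varepsilon_1)$ as $\varepsilon_1\searrow 0$ is the natural trade-off one expects, and it will presumably be used downstream to calibrate the $\io|\na w|^6$ term against a dissipative quantity of corresponding order when one eventually tries to control $\io|\na w|^p$ for some $p>2$ as announced just before Lemma \ref{lemmaEstimateDerivativeusquare}.
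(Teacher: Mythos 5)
Your proposal is correct and takes essentially the same route as the paper's own proof: testing the first equation of \eqref{AuxiliaryMainsSystem} with $u$, absorbing $\tfrac12\int_\Omega|\nabla u|^2$ by a balanced Young inequality, and then splitting $u^2|\nabla w|^2$ by Young with conjugate exponents $3$ and $\tfrac32$, which reproduces exactly the constant $D_1(\varepsilon_1)=\tfrac{\chi^3}{3}(6\varepsilon_1)^{-1/2}$. The only cosmetic point (shared with the paper's write-up) is that multiplying the resulting inequality by $2$ formally yields $2\varepsilon_1$ and $2D_1(\varepsilon_1)$ on the right; since $\varepsilon_1>0$ is arbitrary, this is immaterial.
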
 
The following results will all be aimed at controlling the size of $\int_\Omega \lvert \nabla w\rvert^2$ at large time $t$.

\begin{lemma}\label{lemmaEstimateDerivativeNablawFourth}  
 Assuming \eqref{assumptions} and \eqref{initdata}, let $(u,w)$ be the global classical solution of problem \eqref{AuxiliaryMainsSystem} provided by Theorem \ref{MainTheorem} and \eqref{transformationVtoW}. Then for any positive $\varepsilon_2$ we have that on $(0,∞)$
 \begin{equation}\label{timederivativenablaw^4}
 \begin{split}
 \frac{d}{dt}\int_\Omega \lvert \nabla w\rvert^4 +\f{9}{16}\int_\Omega\lvert \nabla  \lvert \nabla w\rvert^2\rvert^2& \leq \Big(\frac{16}{9}+96 \varepsilon_2\Big)\int_\Omega \lvert \nabla w\rvert^6+96 D_2(\varepsilon_2)\beta \int_\Omega u^3 \\ &
\quad + \Cdom \kl{\io |\na w|^2}^2 +96 D_2(\varepsilon_2)(1-\beta)\lvert \Omega \rvert,
  \end{split}
 \end{equation}
 where $D_2(\varepsilon_2)=\frac{2}{3}(3\varepsilon_2)^{-\frac{1}{2}}$ and $\Cdom$ is as in Lemma \ref{lem:bdry}.
 \begin{proof}
By means of the identity $\Delta |\nabla w|^2=2 \nabla w \cdot \nabla \Delta w +2 |D^2 w|^2$ 
 and using the second equation of \eqref{AuxiliaryMainsSystem} and its corresponding boundary conditions we can write 
 \begin{equation*} 
 \begin{split}
& \frac{d}{dt}\int_\Omega  |\nabla w|^4 = 4 \int_\Omega \lvert \nabla w\rvert^2\nabla w \cdot (\nabla \Delta w -\nabla |\nabla w|^2+\nabla f(u))\\ &
=2\int_\Omega \rvert \nabla w\lvert^2\Delta \rvert \nabla w\lvert^2-4 \int_\Omega  |D^2w|^2|\nabla w|^{2}\\ &
\quad -4\int_\Omega \lvert \nabla w \rvert^2 \nabla w \cdot \nabla \lvert \nabla w\rvert^2
+4 \int_\Omega f'(u)\lvert \nabla w \rvert^2 \nabla u \cdot \nabla w  \quad \text{on } (0,∞).
 \end{split}
 \end{equation*}
Now 
integration by parts gives
 \begin{equation}\label{A_001}
 \begin{split}
 \frac{d}{dt}\int_\Omega  |\nabla w|^4 &+2 \int_\Omega \rvert\nabla  \lvert \nabla w\rvert^2\lvert^2+4\int_\Omega  |D^2w|^2|\nabla w|^{2} \\ & \leq  -4\int_\Omega \lvert \nabla w \rvert^2 \nabla w \cdot \nabla \lvert \nabla w\rvert^2 -4 \int_\Omega  f(u) \nabla w \cdot \nabla \lvert \nabla w \rvert^2 \\ &
\quad -4\int_\Omega f(u) \rvert \nabla w\lvert^2\Delta  w+2\intdom |\na w|^2 \f{\partial|\na w|^2}{\partial \nu} \quad \textrm{on }\quad (0,\infty),
 \end{split}
 \end{equation}
where, according to Lemma \ref{lem:bdry}, we can estimate 
\begin{equation}\label{est:bdry}
 2\intdom |\na w|^2 \f{\partial|\na w|^2}{\partial \nu} \le \f1{16} \io |\na |\na w|^2|^2 + \Cdom \kl{\io |\na w|^2}^2 \qquad \text{on } (0,\infty).
\end{equation}
 Subsequently the Young inequality produces
 \begin{equation}\label{A_01}
 \begin{split}
-4\int_\Omega \lvert \nabla w \rvert^2 \nabla w \cdot \nabla \lvert \nabla w\rvert^2  & \leq \frac{9}{4}\int_\Omega |\nabla |\nabla w|^2|^2+\frac{16}{9}\int_\Omega \lvert \nabla w \rvert^6 \quad \text{on } (0,\infty),
 \end{split}
 \end{equation}
 and
  \begin{equation}\label{A_02}
  \begin{split}
&-4 \int_\Omega f(u) \nabla w \cdot \nabla \lvert \nabla w \rvert^2 -4\int_\Omega f(u) \rvert \nabla w\lvert^2\Delta  w \\ &   \leq \frac{1}{16}\int_\Omega |\nabla |\nabla w|^2|^2 +64\int_\Omega u^{2\beta}\lvert \nabla w \rvert^2 +\frac{1}{8}\int_\Omega |\nabla w|^2|\Delta w|^{2} + 32\int_\Omega u^{2\beta}\lvert \nabla w \rvert^2\\ &
\leq \frac{1}{16}\int_\Omega |\nabla |\nabla w|^2|^2 +\frac{1}{4}\int_\Omega |D^2w|^2|\nabla  w|^{2}+96\int_\Omega u^{2\beta}\lvert \nabla w \rvert^2 \quad \textrm{on }\quad (0,\infty),
  \end{split}
  \end{equation}
  where we have used the pointwise relation $|\Delta w|^2 \leq 2 |D^2w|^2$, valid throughout $\Om$, and, of course, \eqref{cond:f}. 
  
  On the other hand, again two applications of the Young inequality for any $ε_2>0$ yield 
\begin{equation}\label{A_03}
\begin{split}
\int_\Omega u^{2\beta}\lvert \nabla w \rvert^2&\leq \varepsilon_2\int_\Omega \lvert \nabla w\rvert^6+D_2(\varepsilon_2)\int_\Omega u^{3\beta}
\leq \varepsilon_2\int_\Omega \lvert \nabla w\rvert^6+D_2(\varepsilon_2)\beta\int_\Omega u^{3}\\ &
\quad +D_2(\varepsilon_2)(1-\beta)\lvert \Omega\rvert \quad \text{on } (0,∞)
\end{split}
\end{equation}
with $D_2(\varepsilon_2)=\frac{2}{3}(3\varepsilon_2)^{-\frac{1}{2}}$.

Finally, by plugging \eqref{est:bdry}, \eqref{A_01}, \eqref{A_02} and \eqref{A_03} into \eqref{A_001}, and in view of the relation 
\[\lvert \nabla \lvert \nabla w\rvert^2\rvert^2=4\lvert D^2w \nabla w \rvert^2\leq 4|D^2 w|^2\lvert \nabla w \rvert^2,\]
we readily have the claim.
\end{proof}
\end{lemma}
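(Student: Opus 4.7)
\textbf{Proof plan for Lemma \ref{lemmaEstimateDerivativeNablawFourth}.} The plan is to compute $\frac{d}{dt}\io |\nabla w|^4$ directly from $4\io |\nabla w|^2 \nabla w \cdot \nabla w_t$, substitute $\nabla w_t = \nabla \Delta w - \nabla|\nabla w|^2 + \nabla f(u)$ using the second equation of \eqref{AuxiliaryMainsSystem}, and then process the three resulting groups of terms via well-chosen integrations by parts and Young's inequalities.

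For the $\nabla \Delta w$ piece I would invoke the pointwise identity $\Delta|\nabla w|^2 = 2\nabla w \cdot \nabla \Delta w + 2|D^2w|^2$, which, after multiplying by $2|\nabla w|^2$ and integrating by parts in the term $\io|\nabla w|^2 \Delta|\nabla w|^2$, produces the good contribution $-2\io|\nabla|\nabla w|^2|^2$, the penalty $-4\io|D^2w|^2|\nabla w|^2$ (which will be useful later to absorb a $|\nabla w|^2|\Delta w|^2$ term through the pointwise inequality $|\Delta w|^2 \le 2|D^2w|^2$), and the boundary integral $2\intdom |\nabla w|^2 \frac{\partial|\nabla w|^2}{\partial \nu}$. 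For the $\nabla f(u)$ piece, to avoid touching $\nabla u$ at all, I would integrate by parts twice, moving all derivatives onto $w$ and producing $-4\io f(u)\nabla w \cdot \nabla|\nabla w|^2 - 4\io f(u)|\nabla w|^2 \Delta w$ (the surface contribution vanishes thanks to the Neumann boundary condition $\frac{\partial w}{\partial \nu}=0$); these two terms are then bounded using only the size estimate $f(u)\le u^\beta$ from \eqref{cond:f}, so that \eqref{cond:fprime} is not needed here.

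The Young estimates would be arranged as follows: (i) the drift term $-4\io|\nabla w|^2 \nabla w \cdot \nabla|\nabla w|^2$ is split into a multiple of $\io|\nabla|\nabla w|^2|^2$ (partially absorbed by the $-2\io|\nabla|\nabla w|^2|^2$ already available on the left-hand side) plus a multiple of $\io|\nabla w|^6$; (ii) each $f(u)$ contribution is split so that small multiples of $\io|\nabla|\nabla w|^2|^2$ and $\io|D^2w|^2|\nabla w|^2$ can be absorbed and a remainder of the form $\io u^{2\beta}|\nabla w|^2$ is left; (iii) this latter remainder is then further split by Young with exponents $3$ and $\tfrac{3}{2}$ into $\varepsilon_2\io|\nabla w|^6 + D_2(\varepsilon_2)\io u^{3\beta}$ with $D_2(\varepsilon_2)=\tfrac{2}{3}(3\varepsilon_2)^{-1/2}$, after which the elementary inequality $u^{3\beta}\le \beta u^3 + (1-\beta)$ (Young again, since $\beta<1$) puts the remainder in the form claimed.

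The main obstacle is handling the boundary integral without any convexity assumption on $\Omega$: this is precisely where Lemma \ref{lem:bdry} enters, absorbing $2\intdom|\nabla w|^2 \frac{\partial |\nabla w|^2}{\partial \nu}$ into a small multiple of $\io|\nabla|\nabla w|^2|^2$ and the extra penalty $\Cdom\left(\io|\nabla w|^2\right)^2$ that appears on the right-hand side of the claim. Beyond this, the derivation is bookkeeping: one has to select the Young constants so that a definite positive multiple of $\io|\nabla|\nabla w|^2|^2$ survives on the left-hand side, and so that the Hessian penalty $-4\io|D^2w|^2|\nabla w|^2$ suffices to absorb the $\tfrac{1}{8}\io|\nabla w|^2|\Delta w|^2$ generated by Young's inequality on the second $f(u)$-term, via $|\Delta w|^2\le 2|D^2w|^2$.
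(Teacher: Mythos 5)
Your plan coincides with the paper's proof: the same time-differentiation of $\io |\na w|^4$, the identity $\Delta|\na w|^2=2\na w\cdot\na\Delta w+2|D^2w|^2$, integration by parts on the $\na f(u)$ term to avoid $f'$ (so only \eqref{cond:f} is used), Lemma \ref{lem:bdry} for the boundary integral, and the same Young splittings leading to $\varepsilon_2\io|\na w|^6+D_2(\varepsilon_2)\io u^{3\beta}$ with $u^{3\beta}\le\beta u^3+(1-\beta)$. The only detail left implicit in your bookkeeping is that recovering the stated constant $\tfrac{9}{16}$ also uses the pointwise relation $|\na|\na w|^2|^2\le 4|D^2w|^2|\na w|^2$ to convert the leftover Hessian term, exactly as the paper does.
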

\begin{lemma}\label{LemmaWithNablawSqureBoundedAssumption} 
Assuming \eqref{assumptions} and \eqref{initdata}, 
let $(u,w)$ be the global classical solution of problem \eqref{AuxiliaryMainsSystem} provided by Theorem \ref{MainTheorem} and \eqref{transformationVtoW}.  If for some $M>0$ and $t_0\geq 0$
\begin{equation}\label{AssumptionOnnablaw^2Eventuall}
\int_\Omega \lvert \nabla w(\cdot,t)\rvert^2\leq M\quad \textrm{for all}\quad t >t_0,
\end{equation}
then we have that  
\begin{equation}\label{FirstTimeDerivativeuSquareNablawFourth}
\begin{split}
 \frac{d}{dt}\Big(\int_\Omega u^2&+\int_\Omega \lvert \nabla w\rvert^4\Big) +\kl{1-8 C^{3}_{GN}m(D_1(\varepsilon_1)+96βD_2(\varepsilon_2))}\int_\Omega \lvert \nabla u\rvert^2\\ &
 \quad +\kl{\f9{16}-2(\varepsilon_1+\frac{16}{9}+96\varepsilon_2)C_{GN}M}\int_\Omega\lvert \nabla  \lvert \nabla w\rvert^2\rvert^2   \\ &
\leq  \Cdom M^2 + 96 D_2(\varepsilon_2)(1-\beta)\lvert \Omega \rvert +8m^3 C^{3}_{GN}\kl{D_1(\varepsilon_1)+96βD_2(\varepsilon_2)}\\ &
\quad + \kl{\varepsilon_1+\frac{16}{9}+96\varepsilon_2}C^{2}_{GN}M^3 \quad \text{on } (t_0,\infty),
 \end{split}
 \end{equation}
where $C_{GN}$  is the constant introduced  in Lemma \ref{InequalityG-NLemma}, $\varepsilon_1$ and $\varepsilon_2$ are arbitrary positive constants and $D_1(\varepsilon_1)$ and $D_2(\varepsilon_2)$ have been defined in Lemmas  \ref{lemmaEstimateDerivativeusquare}   and \ref{lemmaEstimateDerivativeNablawFourth}.
\begin{proof}
By adding the inequalities \eqref{timeDerivative_u^2} and \eqref{timederivativenablaw^4}, both valid on $(0,∞)$, we get 
\begin{equation}\label{FirstTimeDerivative_usquare_nablavpower4} 
\begin{split}
 \frac{d}{dt}\Big(\int_\Omega u^2&+\int_\Omega \lvert \nabla w\rvert^4\Big) +\int_\Omega \lvert \nabla u\rvert^2+\f9{16} \int_\Omega\lvert \nabla  \lvert \nabla w\rvert^2\rvert^2   \\ &
\leq \Big(\varepsilon_1+\frac{16}{9}+96 \varepsilon_2\Big)\int_\Omega \lvert \nabla w\rvert^6 +(D_1(\varepsilon_1)+96β D_2(\varepsilon_2))\int_\Omega u^3\\ &
  \quad  \Cdom M^2 +96 D_2(\varepsilon_2)(1-\beta)\lvert \Omega \rvert \quad \text{on } (0,∞).
 \end{split}
 \end{equation}
Now, we apply \eqref{InequalityTipoG-N_Version2} to achieve throughout $(0,∞)$ 
\begin{equation*}
\begin{split}
\int_\Omega \lvert \nabla w\rvert^6&=\|  \lvert \nabla w \rvert^2\|_{L^{3}(\Omega)}^3 \leq C_{GN}  \|   \lvert \nabla w \rvert^2 \|_{L^{1}(\Omega)} \|  \lvert \nabla w \rvert^2 \|_{W^{1,2}(\Omega)}^{2}\\ &
\leq C_{GN}\int_\Omega \lvert \nabla w\rvert^2\int_\Omega \lvert \nabla w\rvert^4+ C_{GN}\int_\Omega \lvert \nabla w\rvert^2\int_\Omega \lvert \nabla \lvert \nabla w\rvert^2\rvert^2,
\end{split}
\end{equation*}
and using 
\begin{equation*}
\int_\Omega \lvert \nabla w\rvert^4\leq \Big(\int_\Omega \lvert \nabla w\rvert^6\Big)^\frac{1}{2}\Big(\int_\Omega \lvert \nabla w\rvert^2\Big)^\frac{1}{2} \quad \text{on } (0,∞), 
\end{equation*}
we obtain through the Young inequality and assumption \eqref{AssumptionOnnablaw^2Eventuall}
\begin{equation}\label{Estim_For_nabla wpower6Final}    
\begin{split}
\int_\Omega \lvert \nabla w\rvert^6&
\leq C_{GN}\Big(\int_\Omega \lvert \nabla w\rvert^6\Big)^\frac{1}{2}\Big(\int_\Omega \lvert \nabla w\rvert^2\Big)^\frac{3}{2}+ C_{GN}\int_\Omega \lvert \nabla w\rvert^2\int_\Omega \lvert \nabla \lvert \nabla w\rvert^2\rvert^2 \\ &
\leq \frac{1}{2} \int_\Omega \lvert \nabla w\rvert^6+ \frac{C_{GN}^2}{2}\Big(\int_\Omega \lvert \nabla w\rvert^2\Big)^3 + C_{GN}\int_\Omega \lvert \nabla w\rvert^2\int_\Omega \lvert \nabla \lvert \nabla w\rvert^2\rvert^2  \\ &
\leq \frac{1}{2} \int_\Omega \lvert \nabla w\rvert^6+ \frac{C_{GN}^2M^3}{2}+ C_{GN}M\int_\Omega \lvert \nabla \lvert \nabla w\rvert^2\rvert^2 \quad \text{on } (t_0,∞). 
\end{split}
\end{equation}
Turning our attention to the term $\int_\Omega u^3$, the Gagliardo-Nirenberg inequality \eqref{InequalityTipoG-N} with the particular choice $\mathfrak{p}=3$, $\mathfrak{s}=\mathfrak{q}=1$ and $\theta=\frac{2}{3}<1$, gives in conjunction with  \eqref{AlgebraicInequality2toalpha},   
 \begin{equation}\label{Estim_3_For_u^3}
 \begin{split}
 \int_\Omega u^{3}=\lvert \lvert u \lvert \lvert_{L^3(\Omega)}^3 
 &\leq \Big[C_{GN}\Big(        \lvert \lvert\nabla u\lvert \lvert_{L^2(\Omega)}^{\theta}      \lvert \lvert u\lvert \lvert_{L^1(\Omega)}^{1-\theta}  +     \lvert \lvert u\lvert \lvert_{L^1(\Omega)}\Big )\Big]^3\\&
 \leq (8C^{3}_{GN})\Big(     m \int_\Omega \lvert\nabla u  \lvert^2+m^3\Big )\quad \text{on } (0,∞),
 \end{split}
 \end{equation}
 where we also considered the mass conservation property, i.e. $\int_\Omega u=m.$  Finally, by using \eqref{Estim_For_nabla wpower6Final}  and \eqref{Estim_3_For_u^3}, \eqref{FirstTimeDerivative_usquare_nablavpower4}  reads exactly as in \eqref{FirstTimeDerivativeuSquareNablawFourth}.
\end{proof}
\end{lemma}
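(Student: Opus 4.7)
The plan is to add the two differential inequalities \eqref{timeDerivative_u^2} and \eqref{timederivativenablaw^4} derived in Lemmas \ref{lemmaEstimateDerivativeusquare} and \ref{lemmaEstimateDerivativeNablawFourth}, then exploit the hypothesis \eqref{AssumptionOnnablaw^2Eventuall} to convert the boundary-induced contribution $\Cdom\bigl(\io |\na w|^2\bigr)^2$ into the flat constant $\Cdom M^2$ on $(t_0,\infty)$. This produces a preliminary ODI in which the right-hand side contains $A\io|\na w|^6$ and $B\io u^3$, with $A=\varepsilon_1+\tfrac{16}{9}+96\varepsilon_2$ and $B=D_1(\varepsilon_1)+96\beta D_2(\varepsilon_2)$, and must be played against the dissipations $\io |\na u|^2$ and $\tfrac{9}{16}\io |\na|\na w|^2|^2$ sitting on the left.

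To absorb the $\io |\na w|^6$ term I would apply the Gagliardo--Nirenberg inequality \eqref{InequalityTipoG-N_Version2} to $|\na w|^2$, which gives
\[
\io |\na w|^6 \le C_{GN}\io |\na w|^2 \io |\na w|^4 + C_{GN}\io |\na w|^2 \io |\na|\na w|^2|^2,
\]
then interpolate $\io |\na w|^4 \le \bigl(\io|\na w|^6\bigr)^{1/2}\bigl(\io|\na w|^2\bigr)^{1/2}$, use Young's inequality to absorb $\tfrac12 \io |\na w|^6$ on the left, and invoke \eqref{AssumptionOnnablaw^2Eventuall} to arrive at
\[
\io |\na w|^6 \le C_{GN}^2 M^3 + 2 C_{GN} M \io |\na|\na w|^2|^2.
\]
For the $\io u^3$ contribution I would instead apply \eqref{InequalityTipoG-N} with $\mathfrak{p}=3$, $\mathfrak{q}=\mathfrak{s}=1$, $\theta=\tfrac23$, combine with \eqref{AlgebraicInequality2toalpha} and the mass conservation $\io u = m$ from \eqref{Bound_of_u}, to get $\io u^3 \le 8 C_{GN}^3 \bigl( m \io |\na u|^2 + m^3\bigr)$.

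Substituting these two bounds into the preliminary ODI then moves a portion of $\io|\na u|^2$ and of $\io |\na|\na w|^2|^2$ from the right side back to the left, producing precisely the coefficients $1 - 8 C_{GN}^3 m\bigl(D_1(\varepsilon_1) + 96\beta D_2(\varepsilon_2)\bigr)$ and $\tfrac{9}{16} - 2(\varepsilon_1+\tfrac{16}{9}+96\varepsilon_2) C_{GN} M$ that appear in \eqref{FirstTimeDerivativeuSquareNablawFourth}, while the remaining constants bundle together into the claimed right-hand side $\Cdom M^2 + 96 D_2(\varepsilon_2)(1-\beta)|\Omega| + 8 m^3 C_{GN}^3(D_1(\varepsilon_1)+96\beta D_2(\varepsilon_2)) + (\varepsilon_1+\tfrac{16}{9}+96\varepsilon_2)C_{GN}^2 M^3$. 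The only real difficulty is bookkeeping: one must carefully track the factor $2$ that emerges from the Young-type absorption in the Gagliardo--Nirenberg step on $|\na w|^2$, as it is precisely this factor that dictates the prefactor $2(\varepsilon_1+\tfrac{16}{9}+96\varepsilon_2)C_{GN}M$ subtracted from $\tfrac{9}{16}$ in the dissipation coefficient, and consequently the smallness regime in which this ODI will later be exploited.
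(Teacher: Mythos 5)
Your proposal follows exactly the paper's own argument: adding the inequalities of Lemmas \ref{lemmaEstimateDerivativeusquare} and \ref{lemmaEstimateDerivativeNablawFourth}, bounding the boundary term by $\Cdom M^2$ via \eqref{AssumptionOnnablaw^2Eventuall}, absorbing $\io|\na w|^6$ through \eqref{InequalityTipoG-N_Version2}, interpolation and Young (including the factor $2$ that produces the coefficient $2(\varepsilon_1+\tfrac{16}{9}+96\varepsilon_2)C_{GN}M$), and estimating $\io u^3$ by \eqref{InequalityTipoG-N} with $\mathfrak{p}=3$, $\mathfrak{q}=\mathfrak{s}=1$, $\theta=\tfrac23$ together with mass conservation. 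This is correct and coincides with the paper's proof in every essential step.
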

In the next result we shall show uniform-in-time boundedness of the $L^4(\Omega)$-norm of $\nabla w$ beyond some time, which will be used in order to obtain eventual boundedness of $u$. This is possible through a smallness assumption on $m$.
\begin{lemma}\label{LemmaFixingValueMofNbawSquare} 
Assume \eqref{assumptions}. For any $M\in (0,\frac{9}{17 \cdot 32 C_{GN}})$, it is possible to find $γ>0$ such that if a global solution $(u,w)$ of problem \eqref{AuxiliaryMainsSystem} emanates from initial data as in \eqref{initdata} and \eqref{transformationVtoW} and fulfilling $\int_\Omega u_0=:\bar{m}\leq \frac{1}{16 \gamma C_{GN}^3}$ and also satisfies \eqref{AssumptionOnnablaw^2Eventuall} of Lemma \ref{LemmaWithNablawSqureBoundedAssumption} for some $t_0>0$, 
then it has the following property: For any $τ>0$ there exists a positive constant $K$ such that 
\begin{equation*} 
\int_\Omega \lvert \nabla w(\cdot, t)\rvert^4\leq K \quad \textrm{for all}\quad  t>t_0+τ.
\end{equation*}
\begin{proof}
Let $D_1(\varepsilon_1)$ and $D_2(\varepsilon_2)$ be the $\varepsilon$-dependent functions defined in Lemmas  \ref{lemmaEstimateDerivativeusquare}  and \ref{lemmaEstimateDerivativeNablawFourth}. Since $M\in (0,\frac{9}{17 \cdot 32 C_{GN}})$, by choosing 
$$\varepsilon_1=\frac{1}{32 MC_{GN}}-\frac{17}{9}>0, \quad \varepsilon_2=\frac{1}{96\cdot 9}\quad \textrm{and}\quad \gamma=D_1(\varepsilon_1)+96\beta D_2(\varepsilon_2),$$
we have that, in view of the assumption $\int_\Omega u_0=\bar{m}\leq \frac{1}{16 \gamma C_{GN}^3}$, 
\[
2\kl{\varepsilon_1+\frac{16}{9}+96\varepsilon_2}C_{GN}M=\frac{1}{16}\quad \textrm{and}\quad 8 C^{3}_{GN}\bar{m}\gamma \leq \frac{1}{2}.
\]
Hence, through \eqref{AssumptionOnnablaw^2Eventuall} of Lemma \ref{LemmaWithNablawSqureBoundedAssumption}, inequality \eqref{FirstTimeDerivativeuSquareNablawFourth} reads
\begin{equation}\label{SecondTimeDerivative_usquare_nablavpower4}
\begin{split}
 \frac{d}{dt}\Big(\int_\Omega u^2&+\int_\Omega \lvert \nabla w\rvert^4\Big) +\frac{1}{2}\int_\Omega \lvert \nabla u\rvert^2
+ \frac{1}{2}\int_\Omega\lvert \nabla  \lvert \nabla w\rvert^2\rvert^2   
\leq c_4\quad \textrm{for all}\quad t>t_0,
 \end{split}
 \end{equation}
 with $c_4=\Cdom M^2+96 D_2(\varepsilon_2)(1-\beta)\lvert \Omega \rvert +8\bar{m}^3C_{GN}^3γ +C_{GN}M^2/16$. 
 
 Now we are in the favourable position to control the integrals involving $|\nabla u|^2$ and $\lvert \nabla  \lvert \nabla w\rvert^2\rvert^2$ by using again \eqref{InequalityTipoG-N} 
 for $\mathfrak{p}=2$, $\mathfrak{q}=\mathfrak{s}=1$ and $\theta=\frac{1}{2}$, which shows that 
\[
 \norm[L^2]{f}^4 \le (2C_{GN})^4 \left(\norm[L^2]{\nabla f}^2\norm[L^1]{f}^2 + \norm[L^1]{f}^4\right)
\]
and hence for $f\in L^4(\Om)$ with $\nabla f\in L^2(\Om)$  
\begin{equation}\label{gni-otherform}
 -\f12 \norm[L^2]{\nabla f}^2 \le - \f{\norm[L^2]{f}^4}{2(2C_{GN})^4\norm[L^1]{f}^2} +\f{\norm[L^1]{f}^2}2.
\end{equation}
In particular, if we take into account the mass conservation property $\int_\Omega u=\bar{m}$, for all $t>0$ we can write 
 \begin{equation*}  
 \begin{split}
 -\frac{1}{2}\int_\Omega |\nabla u(\cdot,t)|^2&\leq -\frac{1}{2(2C_{GN})^4 \bar{m}^2}\left(\int_\Omega u^2(\cdot,t)\right)^{2}  +\frac{\bar{m}^2}{2}, 
 \end{split}
 \end{equation*}
and similarly, by relying on \eqref{AssumptionOnnablaw^2Eventuall}, from \eqref{gni-otherform} we arrive at 
\begin{equation*}
  \begin{split}
  -\frac{1}{2}\int_\Omega \lvert \nabla |\nabla w(\cdot,t)|^2\rvert^2&\leq -\frac{1}{2 (2C_{GN})^4  M^2}\left(\int_\Omega |\nabla w(\cdot,t)|^4\right)^2 +\frac{M^2}{2}\quad \textrm{for all}\quad t>0.
  \end{split}
  \end{equation*}
From these bounds, and setting $\Phi(t):=\int_\Omega u^2+\int_\Omega \lvert \nabla w\rvert^4$, the inequality \eqref{SecondTimeDerivative_usquare_nablavpower4} implies that $\Phi$ is a sub-solution of the ordinary differential equation 
\begin{equation}\label{AbsorptiveInequality}
\Psi'(t)= -c_5\Psi^2(t)+c_6 \quad \textrm{for all}\quad \textrm{for all}\quad  t>t_0,\\
 \end{equation}
 with $c_5=\frac{1}{4(2C_{GN})^4 }\max\{\bar{m}^{2},M^{2}\}^{-2}$ and $c_6=c_4+\frac{M^2+\bar{m}^2}{2} $. 
 
By considering the function $\bar{\Phi}(t):=\frac{1}{c_5(t-t_0)}+\sqrt{\frac{c_6}{c_5}}$, $t\in(t_0,∞)$, we see that 
\[
\bar{\Phi}'(t)+c_5\bar{\Phi}^2(t)-c_6=
2\sqrt{\f{c_6}{c_5}}(t-t_0)^{-1}
\geq 0 \quad \textrm{for all}\quad  t>t_0,\]
so that $\bar{\Phi}$ is a super-solution of \eqref{AbsorptiveInequality} such that $\bar{\Phi}(t) \nearrow +\infty$ as $t\searrow t_0$. Subsequently an ODE comparison reasoning leads to $\Phi(t) \leq \bar{\Phi}(t)$ for all $t>t_0$ and in particular we have
\[
\io |\na w(\cdot,t)|^4\le \Phi(t)\leq \bar{\Phi}(t_0+τ)=\frac{1}{c_5τ}+\sqrt{\frac{c_6}{c_5}}\quad \textrm{for all} \quad t\geq t_0+τ. \qedhere 
\]
 \end{proof}
 \end{lemma}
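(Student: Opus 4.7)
My plan is to use Lemma \ref{LemmaWithNablawSqureBoundedAssumption} together with a Gagliardo-Nirenberg-based absorption argument to turn the inequality \eqref{FirstTimeDerivativeuSquareNablawFourth} into an autonomous ODE of the form $\Phi'(t) \le -c_5 \Phi^2(t) + c_6$ for the composite quantity $\Phi(t) := \io u^2 + \io |\na w|^4$ on $(t_0,\infty)$, after which a standard super-solution comparison yields the desired uniform bound on $[t_0+\tau,\infty)$.

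The first step is to tune the free parameters $\varepsilon_1, \varepsilon_2 > 0$ so that both dissipation coefficients in \eqref{FirstTimeDerivativeuSquareNablawFourth} are strictly positive. Using the hypothesis $M < \f{9}{17 \cdot 32\, C_{GN}}$, I would take $\varepsilon_1 := \f{1}{32 M C_{GN}} - \f{17}{9} > 0$ and $\varepsilon_2 := \f{1}{864}$, which forces $2(\varepsilon_1 + \f{16}{9} + 96\varepsilon_2) C_{GN} M = \f{1}{16}$, so that the prefactor of $\io |\na |\na w|^2|^2$ reduces to $\f{9}{16} - \f{1}{16} = \f12$. Having then fixed $\gamma := D_1(\varepsilon_1) + 96\beta D_2(\varepsilon_2)$, the assumption $\bar{m} \le \f{1}{16 \gamma C_{GN}^3}$ guarantees $8 C_{GN}^3 \bar{m} \gamma \le \f12$, pushing the prefactor of $\io |\na u|^2$ above $\f12$ as well. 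Inequality \eqref{FirstTimeDerivativeuSquareNablawFourth} therefore collapses to
\[
 \f{d}{dt}\Phi(t) + \f12 \io |\na u|^2 + \f12 \io |\na|\na w|^2|^2 \le c_4 \qquad \text{on } (t_0,\infty)
\]
for a constant $c_4>0$ determined solely by $M$, $\bar{m}$, $\beta$, $|\Om|$, $C_{GN}$ and $\Cdom$.

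The second step converts these dissipation terms into quadratic lower bounds on $\Phi$. Applying the Gagliardo-Nirenberg inequality \eqref{InequalityTipoG-N} with $\mathfrak p = 2$, $\mathfrak q = \mathfrak s = 1$, $\theta = \f12$ to a generic $f$ gives, after squaring, $\|f\|_{L^2}^4 \le (2 C_{GN})^4 (\|\na f\|_{L^2}^2 \|f\|_{L^1}^2 + \|f\|_{L^1}^4)$. Specializing this to $f = u$ (using mass conservation $\io u = \bar{m}$) and to $f = |\na w|^2$ (using \eqref{AssumptionOnnablaw^2Eventuall}) yields
\[
 -\f12 \io |\na u|^2 \le -\f{(\io u^2)^2}{2(2 C_{GN})^4 \bar{m}^2} + \f{\bar{m}^2}{2}, \qquad -\f12 \io |\na|\na w|^2|^2 \le -\f{(\io |\na w|^4)^2}{2(2 C_{GN})^4 M^2} + \f{M^2}{2}.
\]
Summing and using $(a+b)^2 \le 2(a^2 + b^2)$, the previous display becomes $\Phi'(t) \le -c_5 \Phi^2(t) + c_6$ on $(t_0,\infty)$ with positive $c_5, c_6$ depending only on $C_{GN}, \bar{m}, M$ and $c_4$.

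Finally, I would compare $\Phi$ with the super-solution $\bar{\Phi}(t) := \f{1}{c_5(t - t_0)} + \sqrt{c_6/c_5}$, which blows up as $t \searrow t_0$ and satisfies $\bar{\Phi}'(t) + c_5 \bar{\Phi}(t)^2 - c_6 \ge 0$. A standard ODE comparison then yields $\Phi(t) \le \bar{\Phi}(t)$ for all $t > t_0$, and evaluating at $t = t_0 + \tau$ gives the claimed uniform bound $\io |\na w|^4 \le \Phi(t) \le \f{1}{c_5\tau} + \sqrt{c_6/c_5} =: K$ for every $t \ge t_0 + \tau$. The main obstacle in this plan is the simultaneous matching of the smallness thresholds: the interval for $M$ is calibrated precisely so that $\varepsilon_1$ can be chosen positive, and the resulting $\gamma$ must then be reconciled with a sufficiently small upper bound on $\bar{m}$ in order for the $\io |\na u|^2$ coefficient to stay bounded away from zero; weakening either hypothesis would destroy the quadratic absorption on which the entire argument rests.
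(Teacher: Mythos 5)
Your proposal is correct and follows essentially the same route as the paper's own proof: the identical choices of $\varepsilon_1$, $\varepsilon_2$ and $\gamma$ reduce \eqref{FirstTimeDerivativeuSquareNablawFourth} to a dissipative inequality with coefficients $\tfrac12$, the same Gagliardo--Nirenberg estimate with $\mathfrak{p}=2$, $\mathfrak{q}=\mathfrak{s}=1$ converts the dissipation into quadratic absorption for $\Phi$, and the same super-solution $\bar{\Phi}(t)=\frac{1}{c_5(t-t_0)}+\sqrt{c_6/c_5}$ yields the bound at $t\geq t_0+\tau$. No gaps to report.
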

 \begin{lemma}\label{LemmaControllingNablawSquare}
 Under the assumption \eqref{assumptions}, for any $\varGamma>0$, it is possible to find $\hat{m}(\varGamma)>0$ such that for any initial data $(u_0,v_0)$ as in \eqref{initdata} and also fulfilling $\int_\Omega u_0\leq \hat{m}$, there is $t_*> 0$ such that the corresponding global classical solution $(u,w)$ of problem \eqref{AuxiliaryMainsSystem} 
satisfies
  \begin{equation*}
  \int_\Omega \lvert \nabla w(\cdot,t_*)\rvert^2 \leq \varGamma.
  \end{equation*}
 \begin{proof}
Given $m>0$, we let $L_1(m)$ be as in Lemma \ref{LemmaControllinguLoguAndnablauOveru} and $C_1(m):=C_1(m,L_1(m))$ the corresponding constant introduced in Lemma \ref{lem:estimate:L2:spacetime}. Since $L_1(m)$ remains bounded  in a  neighbourhood  of $m=0$, $C_1(m)\searrow 0$ as $m\searrow 0$, so, corresponding to $\varGamma>0$, we then choose $\hat{m}=\hat{m}(\varGamma)>0$ such that $C_1(\hat{m})<2^{-1-\f2{β}}|\Om|^{1-\f2{β}} \varGamma^{\f2{β}}$. 
From integrating the second equation of \eqref{AuxiliaryMainsSystem} over $(0,t)\times \Om$, \eqref{cond:f} and Hölder's inequality, we obtain for any $t>0$ that 
 \begin{equation}\label{Bla}
 \int_\Omega w -\int_\Omega w_0+\int_0^t \int_\Omega \lvert \nabla w\rvert^2= \int_0^t\int_\Omega f(u)\leq (t|\Omega|)^{1-\frac{\beta}{2}}\Big(\int_0^t\int_\Omega u^2\Big)^\frac{\beta}{2}.
 \end{equation}
Owing to the estimate $\int_0^t\io u^2\le C_1t+C_2$ of Lemmata \ref{LemmaControllinguLoguAndnablauOveru} and \ref{lem:estimate:L2:spacetime} combined, where $C_2:=C_2(m,L_2(u_0,w_0))$, due to the nonnegativity of $w$ 
 we deduce from \eqref{Bla} and \eqref{AlgebraicInequality2toalpha}  that
 \begin{equation*}
  \begin{split}
 \int_0^t \int_\Omega \lvert \nabla w\rvert^2\leq  |\Omega|^{1-\frac{\beta}{2}}t^{1-\frac{\beta}{2}}\kl{(2C_1t)^{\f{β}2}+(2C_2)^{\f{β}2}}+\int_\Omega w_0 \quad \textrm{for all} \quad t>0,
  \end{split}
  \end{equation*}
 and for any $t>0$ the average theorem establishes the existence of a time $t_*\in(\frac{t}{2},t)$ such that 
  \begin{equation*} 
   \begin{split}
 \int_\Omega \lvert \nabla w(\cdot,t_*)\rvert^2&=\frac{2}{t} \int_{\frac{t}{2}}^t \int_\Omega \lvert \nabla w\rvert^2\leq \frac{2}{t} \int_0^t \int_\Omega \lvert \nabla w\rvert^2\\ & \leq 
2^{1+\f{β}2}|\Om|^{1-\f{β}2} C_1^{\f{β}2} + 2^{\f{β}2+1}|\Om|^{1-\f{β}2}C_2^{\f{β}2}t^{-\f{β}2} + \f2t \io w_0. 
   \end{split}
   \end{equation*}
According to our choice of $\hat{m}$, for any initial data with $\io u_0\le \hat{m}$ it  is therefore apparently possible to choose $t$ sufficiently large so as to conclude the existence of $t_*$ satisfying 
%
    \begin{equation*}
     \begin{split}
   \int_\Omega \lvert \nabla w(\cdot,t_*)\rvert^2\leq\varGamma,
     \end{split}
     \end{equation*}
     and the proof is concluded.
 \end{proof}
 \end{lemma}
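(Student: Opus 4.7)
The plan is to derive the desired pointwise-in-time smallness of $\io|\na w|^2$ via time averaging. Multiplying the second equation of \eqref{AuxiliaryMainsSystem} by $1$ and integrating over $(0,t)\times\Omega$ (using the no-flux boundary condition) produces the identity
\[
 \io w(\cdot,t) - \io w_0 + \int_0^t \io |\na w|^2 = \int_0^t \io f(u),
\]
and, since $w\ge 0$ on $\Omega\times(0,\infty)$, dropping $\io w(\cdot,t)$ gives an upper bound for $\int_0^t \io |\na w|^2$ in terms of $\int_0^t\io f(u)$ and $\io w_0$.

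Next I would use \eqref{cond:f} and Hölder's inequality in space-time (with exponents $2/\beta$ and $2/(2-\beta)$) to estimate
\[
 \int_0^t\io f(u) \le \int_0^t\io u^\beta \le (t|\Omega|)^{1-\beta/2}\left(\int_0^t\io u^2\right)^{\beta/2}.
\]
The inner space-time $L^2$-norm of $u$ is controlled linearly in $t$ by combining Lemma \ref{LemmaControllinguLoguAndnablauOveru} with Lemma \ref{lem:estimate:L2:spacetime}, yielding $\int_0^t\io u^2 \le C_1(m)t + C_2$, with $C_1(m)=m(2C_{GN})^4(L_1(m)+m)$. Substituting this back and applying the subadditivity of $s\mapsto s^{\beta/2}$ splits the right-hand side into one contribution growing like $(C_1 t)^{\beta/2}\cdot t^{1-\beta/2}$ (that is, like $t$) and another one bounded by $C_2^{\beta/2}\cdot t^{1-\beta/2}$.

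The third step is a standard mean-value argument: for every $t>0$ there exists $t_*\in(t/2,t)$ with $\io|\na w(\cdot,t_*)|^2 \le (2/t)\int_0^t\io|\na w|^2$. Dividing the previous upper bound by $t/2$, the $C_1 t$ contribution yields a $t$-independent term proportional to $|\Omega|^{1-\beta/2}C_1(m)^{\beta/2}$, while the $C_2$ contribution and the $\io w_0$ term decay like $t^{-\beta/2}$ and $t^{-1}$ respectively as $t\to\infty$.

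The key final step exploits the monotonicity property $L_1(m)\searrow 0$ as $m\searrow 0$ highlighted at the end of Lemma \ref{LemmaControllinguLoguAndnablauOveru}, which implies $C_1(m)\searrow 0$ as $m\searrow 0$. Given $\varGamma>0$, I would first fix $\hat m(\varGamma)>0$ small enough that the $t$-independent contribution, roughly $2^{1+\beta/2}|\Omega|^{1-\beta/2}C_1(\hat m)^{\beta/2}$, is strictly below $\varGamma/2$, and only after this choose $t$ so large that the two decaying contributions together stay below $\varGamma/2$. The corresponding $t_*\in(t/2,t)$ then satisfies $\io|\na w(\cdot,t_*)|^2\le\varGamma$. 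No serious analytical difficulty arises since every ingredient has already been established; the only conceptual point is recognising that mass smallness is first transferred into smallness of the time-averaged $\int_0^t\io u^2/t$ and then, via the mean-value theorem on the time-integrated $w$-equation, into pointwise smallness of $\io|\na w|^2$ at some appropriately chosen $t_*$.
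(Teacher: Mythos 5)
Your proposal is correct and follows essentially the same route as the paper: integrating the $w$-equation over $(0,t)\times\Omega$, estimating $\int_0^t\io f(u)$ via \eqref{cond:f} and Hölder's inequality, invoking the space-time $L^2$-bound $\int_0^t\io u^2\le C_1(m)t+C_2$ from Lemmas \ref{LemmaControllinguLoguAndnablauOveru} and \ref{lem:estimate:L2:spacetime}, applying a mean-value argument in time, and exploiting $C_1(m)\searrow 0$ as $m\searrow 0$ to choose $\hat m(\varGamma)$ before sending $t\to\infty$. The only cosmetic difference is that you split the target as $\varGamma/2+\varGamma/2$ while the paper fixes $\hat m$ through an explicit threshold $C_1(\hat m)<2^{-1-\f2{β}}|\Om|^{1-\f2{β}}\varGamma^{\f2{β}}$, which is the same idea.
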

 With the above derived information, and assuming a suitable smallness condition on $m=\int_\Omega u\equiv \int_\Omega u_0$, we can now ensure eventual boundedness of the spatial $L^2$-norm of $\nabla w$. Precisely we have

 \begin{lemma}\label{LemmaEventualBoundednessnablaw}
 Assume \eqref{assumptions} and \eqref{cond:fprime}. 
For any $M>0$, there is $m_*>0$ such that
 for any initial data $(u_0,v_0)$  as in \eqref{initdata} and also fulfilling   $\int_\Omega u_0\leq m_*$, there is $t_*> 0$ such that the corresponding global classical solution $(u,w)$ of problem \eqref{AuxiliaryMainsSystem} provided by Theorem \ref{MainTheorem} and \eqref{transformationVtoW} 
satisfies 
     \begin{equation}\label{EventualBoundedessnablaw}
   \int_\Omega \lvert \nabla w(\cdot,t)\rvert^2 \leq M \quad \text{for all } \quad t \geq t_*.
     \end{equation}
 \begin{proof}
Without loss of generality, we assume $M\in (0,\frac{9}{17\cdot 32 C_{GN}})$ and with 
 $\bar{m}$ from Lemma \ref{LemmaFixingValueMofNbawSquare}, let us set 
\begin{equation}\label{DefinitionmStar}
m   <  \min\Big\{\Big(\frac{\chi   (1-\beta)}{4\lvert \Omega \rvert^{\beta-1}C_{GN}}\Big)^\frac{1}{\beta},\Big(\frac{M\chi(1-\beta)}{4\lvert \Omega \rvert^{1-\beta}}\Big)^\frac{1}{\beta},\bar{m}\Big\},
\end{equation}
which in  particular implies 
\begin{equation*}
\frac{1}{4 C_{GN}}- \frac{m^\beta \lvert \Omega \rvert^{\beta-1}}{\chi(1-\beta)}>0.
\end{equation*}
Now let us pick 
\begin{equation}\label{DefiVarGamma}
0<\varGamma < \min\Big\{\frac{2}{4 C_{GN}}- \frac{2m^\beta\lvert \Omega \rvert^{\beta-1}}{\chi(1-\beta)},\frac{M}{2}\Big\}.
\end{equation}
In light of Lemma \ref{LemmaControllingNablawSquare},  we can find $\hat{m}=\hat{m}(\varGamma)$ so that for any solution emanating from initial data with $\io u_0<\hat{m}$ there is $t_*>0$ so that
%
%
\begin{equation}\label{BoundNablawSquareFor FirnalTheroem}
 \int_\Omega \lvert \nabla w(\cdot,t_*)\rvert^2 \leq \varGamma.
\end{equation}
Letting $m_*:=\min\set{\hat{m},m}$ and assuming that $\io u_0\le m_*$ and $t_*$ is such that \eqref{BoundNablawSquareFor FirnalTheroem} holds, thanks to  \eqref{G_UpperBound} we have that
\begin{equation*}
\mathcal{G}(t_*)\leq \frac{1}{2}\int_\Omega  \lvert \nabla w (\cdot, t_*)\lvert^2 \leq \frac{\varGamma}{2}<\frac{1}{4 C_{GN}}- \frac{m_*^{β} \lvert \Omega \rvert^{\beta-1}}{\chi(1-\beta)}.
\end{equation*}
We are now in the position to apply Lemma \ref{LemmadefiningValueGTzer0} and conclude that $\mathcal{G}'(t)\leq 0$ for all $t>t_*$, which subsequently provides that  $\mathcal{G}(t)\leq \mathcal{G}(t_*)$ for all $t\geq t_*$. Thereafter, from \eqref{nabla_w^2BoundedByG} of Lemma \ref{LemmaEvolutionG} we have that
 \begin{equation*}
\int_\Omega \lvert \nabla w(\cdot,t)\rvert^2\leq 2\mathcal{G}(t) + \frac{2 m_*^\beta\lvert \Omega \rvert^{1-\beta} }{\chi(1-\beta)}\quad \textrm{for all}\quad t>0,
 \end{equation*}
 i.e. through \eqref{G_UpperBound} and  \eqref{DefinitionmStar}-\eqref{BoundNablawSquareFor FirnalTheroem}
  \begin{equation*}
  \begin{split}
 \int_\Omega \lvert \nabla w(\cdot,t)\rvert^2& \leq 2\mathcal{G}(t_*) + \frac{2 m_*^\beta\lvert \Omega \rvert^{1-\beta} }{\chi(1-\beta)}\\ & \leq  \int_\Omega \lvert \nabla w(\cdot,t_*)\rvert^2+\frac{2 m_*^\beta\lvert \Omega \rvert^{1-\beta} }{\chi(1-\beta)}\leq M \quad \textrm{for all}\quad t\geq t_*.\qedhere
 \end{split}
  \end{equation*}
 \end{proof}
 \end{lemma}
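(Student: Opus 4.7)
The strategy is to glue together the two complementary pieces already established: Lemma \ref{LemmaControllingNablawSquare}, which provides smallness of $\int_\Omega |\nabla w|^2$ at a \emph{single} time $t_*$, and Lemma \ref{LemmadefiningValueGTzer0}, which propagates an energy-smallness condition on $\mathcal{G}$ \emph{forward in time}. Passing between these two regimes relies on the sandwich estimates \eqref{nabla_w^2BoundedByG} and \eqref{G_UpperBound} for $\mathcal{G}$. Without loss of generality one may restrict attention to $M\in(0,\frac{9}{17\cdot 32 C_{GN}})$, since smaller $M$ is the harder case.

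The first step is to choose the mass threshold. I would pick $m>0$ small enough that simultaneously
\[
\frac{2m^\beta|\Omega|^{1-\beta}}{\chi(1-\beta)}\le \frac{M}{2}
\quad\text{and}\quad
\frac{1}{4C_{GN}}-\frac{m^\beta|\Omega|^{\beta-1}}{\chi(1-\beta)}>0,
\]
so that the additive correction in \eqref{nabla_w^2BoundedByG} contributes at most $M/2$ and the threshold appearing in Lemma \ref{LemmadefiningValueGTzer0} is strictly positive. Then select $\varGamma>0$ such that $\varGamma/2$ undercuts both the Lemma \ref{LemmadefiningValueGTzer0} threshold and $M/2$, i.e.
\[
0<\varGamma<\min\left\{\frac{1}{2C_{GN}}-\frac{2m^\beta|\Omega|^{\beta-1}}{\chi(1-\beta)},\ \frac{M}{2}\right\}.
\]
For this $\varGamma$, Lemma \ref{LemmaControllingNablawSquare} supplies $\hat{m}(\varGamma)>0$ guaranteeing the existence of some $t_*>0$ with $\int_\Omega |\nabla w(\cdot,t_*)|^2\le \varGamma$ whenever $\int_\Omega u_0\le \hat{m}(\varGamma)$. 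Setting $m_*:=\min\{m,\hat{m}(\varGamma)\}$ (and possibly also $\min$'ed with $\bar m$ from Lemma \ref{LemmaFixingValueMofNbawSquare} if one wishes to invoke its hypotheses later) fixes the smallness threshold.

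With this setup the conclusion follows cleanly. From \eqref{G_UpperBound},
\[
\mathcal{G}(t_*)\le \tfrac12\int_\Omega|\nabla w(\cdot,t_*)|^2\le \tfrac{\varGamma}{2}<\frac{1}{4C_{GN}}-\frac{m_*^\beta|\Omega|^{\beta-1}}{\chi(1-\beta)},
\]
so Lemma \ref{LemmadefiningValueGTzer0} applies at $t_0=t_*$ and yields $\mathcal{G}(t)\le \mathcal{G}(t_*)$ for all $t\ge t_*$. Feeding this back into \eqref{nabla_w^2BoundedByG} gives
\[
\int_\Omega |\nabla w(\cdot,t)|^2\le 2\mathcal{G}(t_*)+\frac{2m_*^\beta|\Omega|^{1-\beta}}{\chi(1-\beta)}\le \varGamma+\frac{M}{2}\le M
\quad\text{for all } t\ge t_*,
\]
which is exactly \eqref{EventualBoundedessnablaw}.

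The only real obstacle here is bookkeeping: making sure the choice of $m_*$ is made \emph{after} $\varGamma$ (so that the $m_*^\beta$-dependent correction in \eqref{nabla_w^2BoundedByG} does not eat up the room reserved for $\varGamma$), and ensuring that the two competing conditions on $m$ and $\varGamma$ are compatible with the positivity of the Lemma \ref{LemmadefiningValueGTzer0} threshold. All of this can be arranged because both error terms vanish as $m\searrow 0$, and no new PDE estimate is needed beyond what Lemmas \ref{LemmaEvolutionG}, \ref{LemmadefiningValueGTzer0}, and \ref{LemmaControllingNablawSquare} already provide.
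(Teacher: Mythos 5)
Your proposal is correct and follows essentially the same route as the paper: small mass chosen so that the additive term in \eqref{nabla_w^2BoundedByG} is at most $M/2$ and the threshold of Lemma \ref{LemmadefiningValueGTzer0} is positive, then $\varGamma$ chosen below both that threshold (doubled) and $M/2$, Lemma \ref{LemmaControllingNablawSquare} supplying the time $t_*$, Lemma \ref{LemmadefiningValueGTzer0} propagating $\mathcal{G}(t)\le\mathcal{G}(t_*)$, and \eqref{G_UpperBound}/\eqref{nabla_w^2BoundedByG} closing the estimate. The only cosmetic difference is that the paper also folds $\bar m$ from Lemma \ref{LemmaFixingValueMofNbawSquare} into the mass threshold (for later use in the proof of Theorem \ref{MainTheoremBoundedness}), which you correctly note is optional for this lemma itself.
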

 \subsubsection*{Proof of Theorem \ref{MainTheoremBoundedness}} 
Let $m_*>0$ be the value introduced in Lemma \ref{LemmaEventualBoundednessnablaw} and let $(u,w)$ be the global classical solution of problem \eqref{problem} provided by Theorem  \ref{MainTheorem} and \eqref{transformationVtoW}, and emanating from  initial data $(u_0,v_0)$  as in \eqref{initdata} and such that $\int_\Omega u_0\leq m_*$. By virtue of Lemma  \ref{LemmaEventualBoundednessnablaw}, we can find $t_*\in(0,\infty)$ such that relation \eqref{EventualBoundedessnablaw} holds with $M\in(0,\frac{9}{17\cdot 32 C_{GN}})$ and Lemma \ref{LemmaFixingValueMofNbawSquare} becomes applicable so that with some $K>0$ 
 \begin{equation*}  
 \int_\Omega \lvert \nabla w(\cdot, t)\rvert^4\leq K \quad \textrm{for all}\quad  t>t_*+1.
 \end{equation*}
Finally Lemma \ref{LemmaFromnablawpower4ToBoundedness} with the choice $p=4$, $t_0=t_*+1$ and $τ=1$ provides the boundedness of $u$ in $(t_*+2,\infty).$ Due to continuity and hence boundedness of $u$ in $\Ombar\times[0,t_*+2]$, this concludes the proof.
 \qed

\subsubsection*{Acknowledgments}
GV is member of the Gruppo Nazionale per l'Analisi Matematica, la Probabilit\`a e le loro Applicazioni (GNAMPA) of the Istituto Na\-zio\-na\-le di Alta Matematica (INdAM) and gratefully acknowledges the Italian Ministry of Education, University and Research (MIUR) for the financial support of Scientific Project ``Smart Cities and Communities and Social Innovation - ILEARNTV  anywhere, anytime - SCN$\_$00307''.
This work was initiated while JL was visiting the Universit\`{a} di Cagliari, in the framework of the project ``Blow-up and global existence of solutions to Keller-Segel type systems modelling chemotaxis'' (INdAM-GNAMPA  Project 2016). He is grateful for the hospitality.

%

\end{document}